\newcommand{\ep}{\varepsilon}
\def\NN{{\mathbb N}}
\def\RR{{\mathbb R}}
\let\temp\phi
\let\phi\varphi
\let\varphi\temp
\def\conv{\operatorname {conv}}
\def\Vert{\operatorname {Vert}}
\def\dist{\operatorname {dist}}
\theoremstyle{definition}
\newtheorem{definition}{Definition}[section]
\theoremstyle{plain}
\newtheorem{thm}[definition]{Theorem}
\newtheorem{obs}[definition]{Observation}
\newtheorem{lemma}[definition]{Lemma}
\newtheorem{cor}[definition]{Corollary}
\newtheorem*{cor*}{Corollary}
\begin{document}

\title{Sharp Threshold for Cliques in Random 0/1 Polytope Graphs}

\author{Catherine Babecki \thanks{Departments of Computing and Mathematical Sciences and of Mathematics, California
Institute of Technology, Pasadena, CA 91125 
Email:  {{\tt cbabecki@caltech.edu}}. Research is supported by  Simons Grant MPS-SIM-00623756.
}  \and Tycho Elling \thanks{Department of Mathematics, University of California, Irvine. Email: {\tt telling@uci.edu}}\and Asaf Ferber \thanks{Department of Mathematics, University of California, Irvine. 
Email:  {{\tt asaff@uci.edu}}. Research is supported by NSF Career DMS-2146406 and an Air Force grant FA9550-23-1-0298.} }

\maketitle

\begin{abstract}
    We study graph-theoretic properties of random $0/1$ polytopes. Specifically, let $Q_p^n \subseteq \{0,1\}^n$ be a random subset where each point is included independently with probability $p$, and consider the graph $G_p$ of the polytope $\conv(Q_p^n)$.
We provide a short and combinatorial proof that $p = 2^{-n/2}$ is a threshold for the edge density of $G_p$, a result originally due to Kaibel and Remshagen. 
We next resolve an open question from their paper by showing that for $p \leq 2^{-n/2 - o(1)}$, $G_p$ exhibits strong edge expansion.
In particular, we prove that, with high probability, every vertex has degree $(1 - o(1))|Q_p^n|$. 
Lastly, we determine the threshold for $G_p$ being a clique, strengthening a result of Bondarenko and Brodskiy. 
We show that with high probability, if
\( p \geq  2^{-\delta n + o(1)} \), then $G_p$ is not a clique, and if \( p \leq 2^{-\delta n - o(1)}\), then \(G_p\) is a clique, where \(\delta \approx 0.8295\).
Our approach combines a combinatorial characterization of edges in graphs arising from polytopes with the Kim-Vu polynomial concentration inequality.
\end{abstract}

\section{Introduction}

Polytopes with 0-1 vertices, known as $0/1$ polytopes, are of great practical interest due to their role in many combinatorial optimization problems, such as the matroid base polytope, the perfect matching polytope, the cut polytope, and the stable set polytope.
We refer the reader to \cite{SchrijverBooks} for an extensive reference on the connections between polyhedra and combinatorial optimization.

Graphs of polytopes first gained attention for their role in the simplex algorithm in the context of linear programming.
More recently, graphs of polytopes have been used to sample uniformly at random from a set of combinatorial objects by running a random walk on the associated graph.
This method works well if the walk mixes quickly enough. 
\emph{Edge expansion} of a graph is enough to guarantee rapid mixing as a consequence of Cheeger's Inequality. We will return shortly to a precise definition of edge expansion, but loosely speaking, a graph has strong edge expansion if every subset of vertices is well-connected to the rest of the graph. 

The Mihail-Vazirani Conjecture asserts that every 0/1 polytope graph has edge expansion at least 1 \cite{MihailVaziraniConj}, which is sufficient for combinatorial algorithms that depend on rapid mixing.
There has been a significant body of work towards this conjecture, which remains open in full generality. The conjecture has been proven for certain classes of 0/1 polytopes; in particular we note the breakthrough work \cite{logconcavepolynomials} which proves the conjecture for any matroid base polytope.
We refer to \cite[Section 3]{LerouxRademacherExpansion} for a more thorough literature review of 0/1 polytopes and expansion. Kaibel and Remshagen \cite{KaibelRemshagenDensity} asked whether the Mihail-Vazirani Conjecture holds for \emph{random} 0/1 polytopes, where the vertices of the polytope are selected from among $\{0,1\}^n$ independently with probability $p$. The work of \cite{LerouxRademacherExpansion} shows that a weaker version of the conjecture holds on average: the edge expansion of a random $n$-dimensional 0/1 polytope
is at least $1/12n$ with high probability. Their proofs are geometric in nature and the result is agnostic to the probability with which vertices are included in the random polytope.  We also note that upcoming work of the third author, Krivelevich, Sales, and Samotij shows that with high probability, the edge expansion of a random 0/1-polytope is bounded below by an absolute constant.

In contrast, our result in Theorem~\ref{thm: edge-expansion} shows that if $p < 2^{-n/2}$, the edge expansion is $\Omega(|V|)$. 
That is, while we cannot say something about every probability $p$, we have nearly maximal edge expansion when $p$ is small. 
Our methods are purely combinatorial, and provide a simpler proof that $p = 2^{-n/2}$ is the threshold for the edge density of the graph of a random 0/1 polytope, first proven in \cite{KaibelRemshagenDensity}. Further, our technique establishes the threshold for when the graph of a random 0/1 polytope is a clique, improving on bounds due to Bondarenko and Brodskiy \cite{BondarenkoBrodskiiCliques}. Precise statements will follow some mathematical preliminaries.

\subsection{Preliminaries}

We first recall some of the basics of polyhedra and refer the reader to \cite{GrunbaumBook,ZieglerBook} for a more complete introduction. 
A \emph{polytope} $P \subset \RR^n$ is the convex hull of a finite set of points in $\RR^n$. That is, for any finite set $S = \{x_1,\ldots, x_m\} \subset \RR^n$, the convex hull
\[
\conv(S) := \left\{ \sum_{i=1}^m \lambda_i x_i \,\middle|\, \lambda_i \geq 0,\ \sum_{i =1}^m \lambda_i = 1 \right\}
\]
is a polytope. A \emph{supporting hyperplane} of a polytope $P$ is a hyperplane $H$ for which $P \cap H$ is nonempty, and $P$ is contained in a closed halfspace defined by $H$. A \emph{face} of a polytope $P$ is the intersection of $P$ with a supporting hyperplane. A $k$-dimensional face is called a $k$-\emph{face}. In particular, $0$-faces are called \emph{vertices}, and $1$-faces are called \emph{edges}. We denote the set of vertices of $P$, i.e., its $0$-dimensional faces, by \( \Vert(P) \). In particular, \( \Vert(P) \) is the minimal set \( Q \subset \mathbb{R}^n \) such that \( P = \conv(Q) \).
The \emph{graph} (or \emph{$1$-skeleton}) of a polytope $P$ is the undirected graph $G_P$ whose vertex set consists of the vertices of $P$, and whose edge set consists of the $1$-faces of $P$. We illustrate some of these concepts in Figure~\ref{fig:poly pics}.

A \emph{$0/1$-polytope} in $\RR^n$ is the convex hull of a subset of the Boolean hypercube $\{0,1\}^n$, i.e., a polytope whose vertices all have coordinates in $\{0,1\}$. 
In this work, we study \emph{random $0/1$-polytopes} and the graphs associated with them.
Let $\mathcal{Q}^n := \{0,1\}^n$ denote the $n$-dimensional Boolean hypercube.
For a probability parameter $0 \leq p \leq 1$, define $Q^n_p \subseteq \mathcal{Q}^n$ as a random subset obtained by including each point $x \in \mathcal{Q}^n$ independently with probability $p$.
The \emph{random $0/1$ polytope} is then 
\(
P_p := \mathrm{conv}(Q_p^n),
\)
and we define the associated graph $G_p$ as the $1$-skeleton of $P_p$. That is, $G_p$ has vertex set $Q_p^n$, and its edge set is the set of $1$-faces of $P_p$. For two points $x,y\in \RR^n$, we use $[x,y]$ to denote the line segment between $x$ and $y$.

\begin{figure}[h]
    \centering
\begin{tabular}{cc}
    \subfloat[]{
   \begin{tikzpicture}[baseline={($ (current bounding box.west) - (0,1ex) $)},scale = .8]
   \tikzstyle{bk}=[circle, fill =black ,inner sep= 2 pt,draw]
     \fill[black!10] (0, -.5) to (0, 2.5) to (2.5, 2.5) to (2.5,-.5) to (0, -.5) ;
\node (a) at (.4,2.3) {\tiny{$01$}};
\node (b) at (2.3, 2.3)  {\tiny{$11$}};
\node (c) at (2.3, -.3) {\tiny{$10$}};
\node (d) at (.4, -.3)  {\tiny{$00$}};
\node (H) at (-.4,1)  {\textcolor{red}{H}};
 \draw[thick, fill =cyan!30] (2,0) to (0,0) to (0,2) to (2,2) to (2,0);
 \node (P) at (1,1)  {\textcolor{blue}{P}};
  \draw[line width =.7 mm,red, <->] (0, -.5) to (0, 2.5);
 \node (1) at (0,2) [bk] {};
\node (2) at (2,2) [bk] {};
\node (3) at (2,0) [bk] {};
\node (4) at (0,0) [bk] {};
    \end{tikzpicture}} \hspace{1 in}
    & 
        \subfloat[]{
   \begin{tikzpicture}[baseline={($ (current bounding box.west) - (0,1ex) $)},scale = .8 ]
   \tikzstyle{bk}=[circle, fill = white ,inner sep= 1 pt,draw]
 \node (1) at (0,2) [bk] {10};
\node (2) at (2,2) [bk] {11};
\node (3) at (2,0) [bk] {01};
\node (4) at (0,0) [bk] {00};
 \draw[thick] (3) to (4) to (1) to (2) to (3);
    \end{tikzpicture}}
      \end{tabular}
    \caption{ (A) shows the square, a 0/1 polytope in $\RR^2.$ Its vertex set is $ \mathcal{Q}^2 $, and the supporting hyperplane $H$ shows that $[00,01]$ is an edge of $P.$
        The graph or 1-skeleton $G_P$ is shown in (B). For instance, $\{00,11\} \notin E(G_P)$ because there is no hyperplane which contains both $00$ and $11$ and supports $P.$}
    \label{fig:poly pics}
\end{figure}
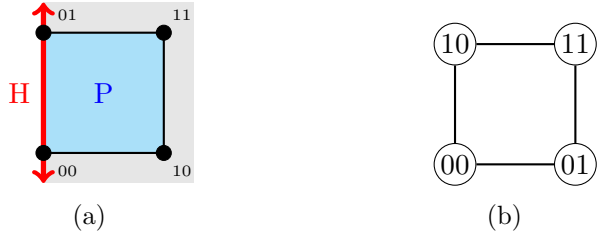

Our goal is to understand the typical combinatorial properties of the graph $G_p$ --  degree distribution, edge-density, and edge expansion -- as the sampling probability $p$ varies with $n$. Given a graph \( G \), we write \( V(G) \) and \( E(G) \) for its sets of vertices and edges, respectively. When $G$ is clear from context, we simply write \( V \) and \( E \). 
The degree of a vertex \( v \in V(G) \) is denoted \( \deg(v)\).

\begin{definition}
 Given a graph $G$, we define its \emph{edge density} as
\[
d(G) := \frac{|E(G)|}{\binom{|V(G)|}{2}}.
\]
\end{definition}
In particular, $0 \leq d(G) \leq 1$, and $d(G) = 1$ if and only if $G$ is a clique.

\begin{definition}
 The \emph{edge expansion} of a graph \( G = (V, E) \) is defined as
\[
\Phi(G) :=  \min_{\substack{\varnothing \subset S \subset V \\ |S| \leq |V|/2 }} \left(\frac{|E(S, V \setminus S)|}{|S|} \right),
\]
where \( E(S, V \setminus S) \) is the set of edges with one endpoint in \( S \) and the other in \( V \setminus S \).   
\end{definition}

\subsection{Main Results}

We begin by providing a very short and simple proof of the following theorem, originally due to Kaibel and Remshagen~\cite{KaibelRemshagenDensity}.

\begin{thm} \label{thm: edge-density}
Let $P_p$ be a random $0/1$ polytope, let $G_p$ be its corresponding graph, and fix $\varepsilon > 0$. Then, with high probability:
\begin{enumerate}
    \item If $p \leq \left(\frac{1 - \varepsilon}{\sqrt{2}}\right)^{n}$, then
    \[
    d(G_p) = 1 - o(1).
    \]
    
    \item If $p \geq \left(\frac{1+\varepsilon}{\sqrt{2}}\right)^{n}$, then
    \[
    d(G_p) = o(1).
    \]
\end{enumerate}
\end{thm}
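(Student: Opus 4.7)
The argument rests on the following combinatorial characterization of edges of 0/1 polytopes: for $x, y \in Q_p^n$ at Hamming distance $k$, the pair $\{x,y\}$ fails to be an edge of $P_p$ iff the midpoint $\tfrac12(x+y)$ lies in $\conv(Q_p^n \setminus \{x,y\})$. Any vertex contributing to such a convex combination must agree with $x$ (hence $y$) on the $n-k$ coordinates where they already agree, so after restricting to the $k$ differing coordinates the condition becomes $\tfrac12\mathbf{1}_k \in \conv(T_{xy})$, where $T_{xy}$ is the projection of $(Q_p^n \cap V_{xy}) \setminus \{x,y\}$ onto those coordinates and $V_{xy}$ is the sub-cube of $\{0,1\}^n$ agreeing with $x$ on the agreeing coordinates. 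Since $|T_{xy}|$ is distributed as $\mathrm{Bin}(2^k - 2, p)$, both parts of the theorem reduce to understanding when a $p$-random subset of $\{0,1\}^k \setminus \{0^k, 1^k\}$ contains $\tfrac12\mathbf{1}_k$ in its convex hull.

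For Part 1, I would use the simple necessary condition $\PP[\{x,y\}\text{ is not an edge}] \le \PP[|T_{xy}| \ge 2] \le \tfrac12 (2^kp)^2$ (with the trivial bound $\le 1$ when $2^kp > 1$), valid because any nontrivial convex combination needs at least two distinct points. Summing the expected number of non-edges over all pairs and dividing by $\binom{2^n}{2}p^2 \approx \EE\binom{|V(G_p)|}{2}$ yields the normalized upper bound
\[
\EE_{K \sim \mathrm{Bin}(n, 1/2)}\!\left[\min\!\big(\tfrac12(2^Kp)^2,\, 1\big)\right].
\]
Setting $k^* := \log_2(1/p) \approx n/2 + \Theta(\e n)$, this expectation splits into a head $\sum_{K \le k^*}$ and a tail $\PP[K > k^*]$; Hoeffding controls the tail by $e^{-\Theta(\e^2 n)}$, while the head is a geometric sum with common ratio $\approx 4$ (for $K$ around $n/2$) and is therefore dominated by its top term $\binom{n}{k^*}/2^n = e^{-\Theta(\e^2 n)}$. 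Combined with Chernoff concentration of $|V(G_p)|$ around $2^n p$ and Markov's inequality on the non-edge count, this yields $d(G_p) = 1 - o(1)$ whp.

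For Part 2, I would show that a typical pair---one with $k$ within $O(\sqrt{n})$ of $n/2$, which accounts for $(1 - o(1))$ of all pairs by Chernoff on $\mathrm{Bin}(n, 1/2)$---is a non-edge with high probability. Here $\EE|T_{xy}| = (2^k - 2)p \ge (1+\e)^{n - o(n)}$ is exponentially large, so by Chernoff $|T_{xy}| \gg k$ whp. The geometric content I need is that any subset $T \subseteq \{0,1\}^k$ with $|T| \gg k$ has $\tfrac12\mathbf{1}_k \in \conv(T)$ with high probability, which I would prove by Kim--Vu-style concentration of the coordinate sums $\sum_{u \in T} u_j$ within $O(\sqrt{|T|})$ of $|T|/2$: this places the uniform-weight centroid of $T$ within $O(1/\sqrt{|T|})$ of $\tfrac12\mathbf{1}_k$ in $\ell^\infty$, and a small reweighting---feasible because $T$ contains both a $0$ and a $1$ in every coordinate whp---yields an exact convex combination equal to the center.

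The main obstacle is that both the upper bound of Part 1 and the lower bound of Part 2 must match the sharp exponent $n/2$. In Part 1 the geometric-sum and Hoeffding-tail estimates must collapse to $e^{-\Theta(\e^2 n)}$ precisely at $p = 2^{-n/2}$, and in Part 2 one must quantitatively upgrade ``$|T_{xy}| \gg k$'' to ``$\tfrac12\mathbf{1}_k \in \conv(T_{xy})$''. The Kim--Vu polynomial concentration inequality is the main analytic tool that keeps these concentration steps quantitative while the rest of the argument remains combinatorial.
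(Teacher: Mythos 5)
Your Part 1 is essentially the paper's argument in a slightly different organization. The paper introduces $\alpha$-typical/atypical pairs and shows that for typical pairs the witness set $W(x,y)$ has size at most one with probability $1-o(1)$, while atypical pairs are exponentially rare; you integrate the same witness bound directly against the $\mathrm{Bin}(n,1/2)$ distribution of the Hamming distance. Both rely on the same key fact — at least two witnesses are needed for $[x,y]$ to fail to be an edge, which is exactly Lemma~\ref{lem: russian gadget}(1) — and both conclude by Markov and Chernoff on $|Q_p^n|$. This is a fine, equivalent route.

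Part 2 is where there is a genuine gap. The paper does not try to show that the uniform-weight centroid of the witness set $T_{xy}$ lies near $\tfrac12\mathbf{1}_k$ and then ``reweight.'' Instead it directly defines a polynomial that counts ordered $2k$-tuples $(z_1,\ldots,z_{2k}) \in (Q_p^n)^{2k}$ whose average is exactly $\tfrac12(x+y)$, bounds $\mathbb{E}[\mathcal{P}]$ from below and $\mathbb{E}_1[\mathcal{P}]$ from above, and applies Kim--Vu to conclude $\mathcal{P}>0$ whp. The reason this extra work is needed is precisely the step you wave at: knowing that the centroid of $T_{xy}$ is within $O(1/\sqrt{|T_{xy}|})$ of $\tfrac12\mathbf{1}_k$ in $\ell^\infty$ and that every coordinate sees both a $0$ and a $1$ does \emph{not} imply $\tfrac12\mathbf{1}_k \in \conv(T_{xy})$. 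The centroid is always in the relative interior of $\conv(T_{xy})$, but that interior can be very thin in some direction, so the ``small reweighting'' you invoke would need a quantitative inner-ball or condition-number estimate for a random $0/1$-set that your sketch does not supply, and cannot be obtained merely from coordinate-sum concentration (which, incidentally, needs only Chernoff, not Kim--Vu). To hit the center exactly you need to argue simultaneously about all $k$ coordinates and about the positivity of the perturbed weights, and that is exactly the linear-algebraic/geometric content the paper sidesteps by counting exact averaging tuples.

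If you want to keep your geometric viewpoint and avoid the paper's polynomial machinery, a rigorous alternative is to union-bound over separating hyperplanes rather than to reweight: $\tfrac12\mathbf{1}_k \notin \conv(T_{xy})$ only if some closed halfspace through the center (of which the cube has $\geq 2^{k-1}$ points by the antipodal symmetry $u \mapsto \mathbf{1}-u$) is entirely missed by $T_{xy}$; for each of the at most $2^{O(k^2)}$ threshold dichotomies of $\{0,1\}^k$ this has probability at most $(1-p)^{2^{k-1}-2} \leq \exp(-(1+\varepsilon)^{n(1-o(1))})$ when $k \approx n/2$ and $p\geq ((1+\varepsilon)/\sqrt2)^n$, and a union bound over hyperplanes and then over pairs $(x,y)$ is overwhelmingly favorable. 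That fills the gap with no Kim--Vu at all; as written, though, your reweighting step is not a proof.
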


Next, addressing a problem posed in~\cite{KaibelRemshagenDensity}, we show that below the threshold \( 2^{-n/2} \), the graph $G_p$ exhibits strong edge expansion. 
In fact, we prove a stronger statement: for values of \( p \) below the threshold \( 2^{-n/2} \), the graph is not only dense but nearly regular, with all vertices having degree close to the maximum possible. 

\begin{thm} \label{thm: edge-expansion}
Let \( \varepsilon > 0 \), and suppose \( p \leq \left(\frac{1 - \varepsilon}{\sqrt{2}}\right)^n \). Let \( P_p \) be a random $0/1$ polytope, and let \( G_p \) be its associated graph. Then, with high probability, every vertex \( v \in V(G_p) \) satisfies
\[
\deg(v) = (1 - o(1)) \cdot |V(G)|.
\]
\end{thm}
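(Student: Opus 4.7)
My plan is to combine a combinatorial characterization of edges in 0/1-polytopes (analogous to the one used for Theorem~\ref{thm: edge-density}) with the Kim--Vu polynomial concentration inequality in order to bound the number of non-neighbors of every vertex. Recall that for $\conv(Q)$ a 0/1-polytope, $\{u,v\}$ fails to span an edge iff the midpoint $(u+v)/2$ lies in $\conv(Q \setminus \{u,v\})$. The simplest witness to this is a \emph{pair witness} -- distinct $a, b \in Q \setminus \{u,v\}$ with $a+b=u+v$ -- and in our sparsity regime pair witnesses are the dominant type; witnesses coming from convex combinations of $\ge 4$ points contribute a negligible correction, handled by a separate short counting argument. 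So, writing $X_x = \mathbb{1}[x \in Q_p^n]$, fixing $v \in \{0,1\}^n$ and conditioning on $v \in Q_p^n$, the number of non-neighbors of $v$ admitting a pair witness is bounded by
\[
W_v := \sum_{u \neq v}\; \sum_{\substack{\{a,b\}:\, a+b = u+v,\, a \neq b \\ \{a,b\} \neq \{u,v\}}} X_u X_a X_b,
\]
where the inner sum ranges over unordered pairs.

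For the first moment, if $u$ is at Hamming distance $k$ from $v$ then the number of admissible pairs $\{a,b\}$ is exactly $2^{k-1}-1$ (choose $a$ on the $k$ disagreement coordinates, modulo swapping with $b$, and exclude $\{u,v\}$), so
\[
\EE[W_v \mid v \in Q_p^n] \le \sum_{k=1}^n \binom{n}{k}(2^{k-1}-1)\, p^3 \le \tfrac{1}{2}\, 3^n p^3.
\]
Combined with the Chernoff concentration $|V(G_p)| = (1+o(1))\, 2^n p$ and the hypothesis $p \le ((1-\ep)/\sqrt{2})^n$, this yields
\[
\frac{\EE[W_v \mid v \in Q_p^n]}{|V(G_p)|} = O\!\left(\left(\tfrac{3(1-\ep)^2}{4}\right)^{\!n}\right) = o(1),
\]
which is exponentially small in $n$.

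The heart of the argument is upgrading the first-moment bound to a statement holding simultaneously for every $v \in Q_p^n$. I apply the Kim--Vu polynomial concentration inequality to $W_v$, viewed as a multilinear polynomial of degree $3$ in the independent Bernoulli indicators $\{X_x : x \neq v\}$. The required inputs are the maximum expected partial derivatives $\EE_j(W_v) = \max_{|A|=j} \EE[\partial_A W_v]$; a direct count yields $\EE_1(W_v) = O(2^n p^2)$, $\EE_2(W_v) = O(p)$ and $\EE_3(W_v) = O(1)$, using that fixing any one of $u, a, b$ leaves $O(2^n)$ completions, while fixing any two of them determines the third. Under our hypothesis on $p$ all three are $O(1)$, and Kim--Vu then produces $|W_v - \EE W_v| = o(|V(G_p)|)$ with per-vertex failure probability $\exp(-\Omega(n))$; a union bound over the at most $2^n$ candidate $v$'s yields $\deg(v) = |V(G_p)| - 1 - B_v = (1-o(1))|V(G_p)|$ for every $v \in Q_p^n$ simultaneously. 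The main technical obstacle is tuning the Kim--Vu parameter so that both the fluctuation is $o(|V(G_p)|)$ and the per-vertex failure probability is small enough to absorb the exponential union bound; this is possible precisely because the hypothesis $\ep > 0$ gives an exponential separation between $\EE W_v$ and $|V(G_p)|$.
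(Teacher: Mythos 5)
There is a genuine gap. Your polynomial $W_v$ counts only \emph{pair witnesses} $\{a,b\}$ with $a+b=u+v$, but a pair $\{u,v\}$ can fail to span an edge without any such pair existing. For instance, with $u=(0,0,0)$, $v=(1,1,1)$, if $(1,0,0),(0,1,0),(0,0,1)\in Q_p^n$ but none of $(0,1,1),(1,0,1),(1,1,0)$ is, then $\tfrac13\bigl((1,0,0)+(0,1,0)+(0,0,1)\bigr)=(\tfrac13,\tfrac13,\tfrac13)\in[u,v]$, so $uv$ is a non-edge witnessed by three points yet there is no balanced pair. You flag these as a ``negligible correction, handled by a separate short counting argument,'' but you never give that argument, and it is not at all automatic --- indeed, the natural way to handle arbitrary witness configurations is precisely the step you've skipped. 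A second, smaller issue is your opening assertion that $\{u,v\}$ fails to span an edge iff the \emph{midpoint} $(u+v)/2$ lies in $\conv(Q\setminus\{u,v\})$; Observation~\ref{def: edges} is about the whole segment $[u,v]$, and the reduction to the midpoint is not obvious (the involution on the box $[u\land v,\,u\lor v]$ that swaps $u\leftrightarrow v$ does not preserve $Q_p^n$).

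The paper's route sidesteps both issues. Lemma~\ref{lem: russian gadget} shows that $|W(x,y)\cap Q_p^n|\ge 2$ is a \emph{necessary} condition for $xy$ to be a non-edge, where $W(x,y)$ is just the set of $0/1$ points in the box $[x\land y,\,x\lor y]$ other than $x,y$ --- no balanced-pair structure needed. Fixing $x$, one then bounds the degree-$2$ polynomial $\mathcal P=\sum X_y X_z$ over typical $y$ and $z\in W(x,y)$ with $z$ far from $x$ in Hamming distance (the atypical $y$ and close pairs are dispatched by Lemma~\ref{lem: no too many atypical} and a Chernoff-type estimate). Every non-neighbor of $x$ contributes at least $2$ to $\mathcal P$, so $\mathcal P=o(2^np)$ directly bounds the number of non-neighbors. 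Your first-moment estimate $\mathbb{E}[W_v]\le\tfrac12 3^n p^3$ and the derivative bounds $\mathbb{E}_1=O(2^np^2)$, $\mathbb{E}_2=O(p)$, $\mathbb{E}_3=O(1)$ look correct for your degree-$3$ polynomial, and the Kim--Vu/union-bound scaffolding is sound; the missing piece is a proof that non-pair-witnessed non-edges are rare. If you prove that, you should find you are essentially re-deriving the witness-set formulation, at which point the degree-$2$ polynomial is the cleaner object to run Kim--Vu on.
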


This immediately implies strong edge expansion. Specifically: 

\begin{cor}
    Let \( \varepsilon > 0 \), and suppose \( p \leq \left(\frac{1 - \varepsilon}{\sqrt{2}}\right)^n \). Let \( P_p \) be a random $0/1$ polytope, and let \( G_p \) be its associated graph. Then, with high probability, 
    \[ \Phi(G) = \Omega(|V|).\]
\end{cor}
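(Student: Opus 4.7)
The plan is to derive the corollary directly from Theorem~\ref{thm: edge-expansion} via an elementary counting argument. By Theorem~\ref{thm: edge-expansion}, with high probability every vertex $v \in V(G_p)$ satisfies $\deg(v) \geq (1-o(1))|V|$. Condition on this event. I will then show that this near-regularity alone forces edge expansion $\Omega(|V|)$, essentially because a vertex of degree close to $|V|$ must have many neighbors outside any set containing it that is at most half the total size.

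Concretely, fix a nonempty subset $S \subseteq V$ with $|S| \leq |V|/2$. For each $v \in S$, at most $|S|-1$ of its neighbors lie in $S$, so the number of neighbors of $v$ in $V \setminus S$ is at least $\deg(v) - (|S|-1) \geq (1-o(1))|V| - |S| + 1$. Summing over $v \in S$ gives
\[
|E(S, V \setminus S)| \;\geq\; |S|\bigl((1-o(1))|V| - |S| + 1\bigr).
\]
Using $|S| \leq |V|/2$, we get
\[
\frac{|E(S, V \setminus S)|}{|S|} \;\geq\; (1-o(1))|V| - |V|/2 \;=\; \bigl(\tfrac{1}{2} - o(1)\bigr)|V|.
\]
Taking the minimum over all admissible $S$ yields $\Phi(G_p) \geq \bigl(\tfrac{1}{2} - o(1)\bigr)|V| = \Omega(|V|)$, as desired.

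There is no real obstacle here: the corollary is a purely deterministic consequence of the near-regularity guarantee from Theorem~\ref{thm: edge-expansion}, and all the randomness has already been absorbed into that statement. The only mild subtlety is observing that the $o(1)$ in the degree bound is uniform over all vertices, which is exactly what Theorem~\ref{thm: edge-expansion} provides, so the bound on $|E(S, V \setminus S)|$ holds simultaneously for every $S$ on the same high-probability event.
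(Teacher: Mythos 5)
Your argument is correct and is exactly the elementary counting deduction the paper has in mind when it says the corollary ``immediately'' follows from Theorem~\ref{thm: edge-expansion}; the paper gives no further proof. The uniformity of the $o(1)$ over all vertices is indeed the point to notice, and you handled it correctly.
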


Although Theorem~\ref{thm: edge-expansion}  implies part~(1) of Theorem~\ref{thm: edge-density}, we include a complete proof of the latter, as it is concise, self-contained, and illustrative of our approach. Lastly, we determine the threshold for when $G_p$ is a clique. This threshold improves on both bounds established in~\cite{BondarenkoBrodskiiCliques}.
Throughout this paper, \( H(\delta) = -\delta \log_2 \delta - (1-\delta) \log_2 (1-\delta) \) is the binary entropy function.

\begin{thm} \label{thm: clique threshold}
Fix $\varepsilon > 0$, and let $\delta>1/2$ satisfy $H(\delta)=2\delta -1$ ($\delta\approx 0.8295$).
Let $P_p$ be a random $0/1$ polytope, let $G_p$ be its graph, Then, with high probability:
\begin{enumerate}
    \item If $p \leq \left( (1 - \varepsilon) 2^{-\delta} \right)^n$, then $G_p$ is a clique.
    
    \item If $p \geq \left((1+\ep)2^{-\delta}\right)^n$, then $G_p$ is not a clique.
\end{enumerate}
\end{thm}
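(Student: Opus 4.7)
The plan starts from the edge characterization: $\{x,y\} \subseteq Q_p^n$ is an edge of $G_p$ iff the midpoint $(x+y)/2$ does not lie in $\conv(Q_p^n \setminus \{x,y\})$. Because the midpoint has integer coordinates exactly where $x_k = y_k$, any convex representation must use points in the $d(x,y)$-dimensional face $F_{xy}$ in which $x$ and $y$ are antipodal corners. Hence $\{x,y\}$ is a non-edge iff the center of $F_{xy}$ belongs to $\conv(S)$ with $S := Q_p^n \cap F_{xy} \setminus \{x,y\}$, and a necessary condition for this is $|S| \geq 2$. Throughout write $p = 2^{-\alpha n}$. The critical identity $H(\delta) = 2\delta - 1$ arises because it is precisely the condition that the expected number of pairs in $Q_p^n$ at Hamming distance $\delta n$ equals $\Theta(1)$ when $p = 2^{-\delta n}$; let $\delta_+ \in (1/2, 1]$ denote the larger root of $H(\delta_+) = 2\alpha - 1$.

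For Part (1), $\alpha > \delta$, so $\delta_+ < \alpha$. First, the expected number of pairs in $Q_p^n$ at distance $\geq \alpha n$ is at most $2^{n(H(\alpha)+1-2\alpha) + o(n)} = 2^{-\Omega(n)}$, vanishing since $H(\alpha) < 2\alpha - 1$; so w.h.p.\ every pair of $Q_p^n$ has distance at most $\delta_+ n$. Conditional on this cap, I would bound, for each even $k \geq 2$, the expected number $N_k$ of ``$k$-witness configurations'': tuples $(\{x,y\}, S)$ with $\{x,y\} \in Q_p^n$, $|S| = k$, $S \subseteq F_{xy} \cap Q_p^n \setminus \{x,y\}$, and $\sum_{s \in S} s = \tfrac{k}{2}(x+y)$. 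Using that the number of $k$-subsets of $\{0,1\}^d$ summing to $\tfrac{k}{2}\mathbf{1}$ is at most $\binom{k}{k/2}^d / k!$, together with $H(\delta_+) = 2\alpha - 1$ and the cap $d \leq \delta_+ n$, a direct computation yields
\[
E[N_k] \;\leq\; C_k \cdot 2^{n(\delta_+ \log_2 \binom{k}{k/2} - k\alpha) + o(n)} \;\leq\; C_k \cdot 2^{-k(\alpha - \delta_+)n + o(n)},
\]
where the second inequality uses $\log_2 \binom{k}{k/2} \leq k$. Summing the resulting geometric tail over $k$ gives $\sum_{k \geq 2} E[N_k] = o(1)$, so Markov implies $G_p$ is a clique w.h.p.

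For Part (2), $\alpha < \delta$, so $\delta_+ > \alpha$. Fix $\delta'_+ \in (\alpha, \delta_+)$ and choose a constant $k = k(\alpha, \delta'_+)$ large enough that $H(\delta'_+) + 1 + \delta'_+ \log_2 \binom{k}{k/2} - (k+2)\alpha > 0$ (possible because $\delta'_+ > \alpha$ and $\log_2 \binom{k}{k/2}/k \to 1$). The contribution to $E[N_k]$ from pairs at Hamming distance $\lfloor \delta'_+ n \rfloor$ is then
\[
E[N_k] \;\geq\; 2^{n(H(\delta'_+) + 1 + \delta'_+ \log_2 \binom{k}{k/2} - (k+2)\alpha) - o(n)} \;=\; 2^{\Omega(n)}.
\]
I would then apply the Kim--Vu polynomial concentration inequality to $N_k$, a polynomial of fixed degree $k+2$ in the independent indicators $\{\mathbf{1}[z \in Q_p^n]\}_{z \in \{0,1\}^n}$, to conclude that $N_k \geq 1$ with high probability. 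The resulting witness exhibits a non-edge, so $G_p$ is not a clique.

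The main obstacle is verifying the Kim--Vu hypothesis for Part (2): for each $1 \leq j \leq k+2$, the maximum expected number of $k$-witnesses extending a specified subset of $j$ points (i.e., $E'_j := \max_{|T| = j} E[\#\{\text{witnesses} \supseteq T\}]$) must satisfy $E'_j \cdot (\log n)^{O(k)} \ll E[N_k]$. This reduces to finitely many first-moment computations---one for each way of fixing $j$ of the $k+2$ points of a prospective witness as either an ``$x$-type'', ``$y$-type'', or an $S$-element---each of the same exponential form as in Part (1). The delicate point is ensuring that conditioning on a partial configuration does not unlock a new dominant exponential direction, in particular that the diameter cap $d \leq \delta_+ n$ continues to control contributions from pairs whose position is partially determined. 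Once this is verified, Kim--Vu gives $|N_k - E[N_k]| = o(E[N_k])$ w.h.p., forcing $N_k \geq 1$ and completing the proof.
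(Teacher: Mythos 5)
Your Part~(2) plan is essentially the paper's argument: build the polynomial counting tuples $(x,y,u_1,\ldots,u_{2k})$ (your $(\{x,y\},S)$) satisfying the exact averaging constraint at a fixed Hamming distance, compute $\EE[\cP]=\omega(1)$, bound $\EE_1[\cP]/\EE[\cP]$ exponentially, and invoke Kim--Vu. The ``delicate point'' you flag --- that conditioning on a partial tuple must not unlock a new exponential direction --- is exactly what the paper handles case-by-case for $\partial_{X_x}$, $\partial_{X_y}$, $\partial_{X_u}$, and those computations go through. So Part~(2) is sound and matches the paper.

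Part~(1), however, rests on an incorrect ``iff.'' You write that $\{x,y\}$ is a non-edge iff the \emph{midpoint} $(x+y)/2$ lies in $\conv\bigl(Q_p^n\setminus\{x,y\}\bigr)$, but the correct criterion (the paper's Observation~\ref{def: edges}) is that \emph{some} interior point of the segment $[x,y]$ lies in that hull. A non-edge need not have a midpoint realization: in $\RR^3$ take $Q=\{\mathbf{0},\mathbf{1},(1,1,0),(1,0,1),(0,1,1)\}$; then $\tfrac23\mathbf{1}=\tfrac13\bigl[(1,1,0)+(1,0,1)+(0,1,1)\bigr]$ shows $[\mathbf{0},\mathbf{1}]$ is a non-edge, yet $\tfrac12\mathbf{1}$ is not in $\conv\{(1,1,0),(1,0,1),(0,1,1)\}$ (those three points span the hyperplane $x_1+x_2+x_3=2$). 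Consequently bounding $\sum_k \EE[N_k]$, where $N_k$ counts only balanced (midpoint) configurations, does not bound the number of non-edges, and Markov does not give you ``clique.'' To close the gap you would need to sum over all rational convex weights $\lambda_i = m_i/M$ (equivalently all targets $a\mathbf{1}$ with $a\in\{1,\ldots,M-1\}$), and argue via a Littlewood--Offord-type bound that the balanced case $m_i\equiv 1$, $a=M/2$ dominates --- but then $M$ (hence the polynomial degree) is no longer bounded independently of $n$, which breaks the clean geometric tail. None of this is addressed. Note that the paper's route for Part~(1) is genuinely different: it uses Lemma~\ref{lem: russian gadget}, whose contrapositive gives the weaker but \emph{correct} necessary condition $|W(x,y)|\geq 2$, and counts quadruples $(x,y,u,v)$ with $u,v\in W(x,y)$ without any sum constraint, refined by a diameter cap.

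Two smaller points. First, the stated bound $\EE[N_k]\leq C_k 2^{n(\delta_+\log_2\binom{k}{k/2}-k\alpha)+o(n)}$ is the contribution at $d=\delta_+ n$, but the exponent $1+H(t)+tL_k-(k+2)\alpha$ (with $L_k=\log_2\binom{k}{k/2}$) is maximized on $[0,\delta_+]$ at $t^*_k=\binom{k}{k/2}/(1+\binom{k}{k/2})$, which equals $2/3<\delta_+$ when $k=2$; so for $k=2$ your stated bound is strictly smaller than the true maximum and hence not a valid upper bound (the correct value, $\log_2 6-4\alpha$, is still negative, so $\EE[N_2]=o(1)$ survives). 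Second, in the diameter-cap step you compute that pairs at distance $\geq\alpha n$ are rare and then conclude every pair has distance $\leq\delta_+ n$; since $\delta_+<\alpha$ the latter is the stronger statement, so you should instead bound pairs at distance $\geq(\delta_++\eta)n$ using $H(\delta_++\eta)<2\alpha-1$.
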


\section{Auxiliary Results}

We collect several auxiliary results that will be used throughout the proofs of our main theorems. We use \( \mathcal{P} \) to denote polynomials, and \( \mathbb{P} \) to denote probabilities.

\subsection{Some Concentration Inequalities}

Since our arguments are probabilistic in nature, we make frequent use of the following concentration inequalities.
We begin with the well-known Chernoff bounds for the upper and lower tails of sums of independent indicator random variables (see~\cite{JLR}):

\begin{lemma}[Chernoff Bound]\label{chernoff}
Let \( X_1, \ldots, X_n \) be independent $0/1$-valued random variables, and let \( X = \sum_{i=1}^n X_i \) with \( \mu := \mathbb{E}[X] \). Then for any \( \delta \in (0,1) \),
\[
\mathbb{P}(|X - \mu| > \delta \mu) \leq 2e^{-\delta^2 \mu / 3}.
\]
\end{lemma}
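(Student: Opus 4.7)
The plan is to prove this by the standard exponential moment (Chernoff--Cramér) method, bounding the upper and lower tails separately and combining them via a union bound. The core idea is to apply Markov's inequality to $e^{tX}$ for a well-chosen $t>0$, exploit independence to factor the moment generating function, and then optimize over $t$.

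For the upper tail, I would first write, for any $t>0$,
\[
\mathbb{P}(X \geq (1+\delta)\mu) = \mathbb{P}\bigl(e^{tX} \geq e^{t(1+\delta)\mu}\bigr) \leq e^{-t(1+\delta)\mu}\,\mathbb{E}[e^{tX}].
\]
By independence, $\mathbb{E}[e^{tX}] = \prod_{i=1}^n \mathbb{E}[e^{tX_i}]$. Writing $p_i := \mathbb{P}(X_i=1)$, we have $\mathbb{E}[e^{tX_i}] = 1 + p_i(e^t-1) \leq \exp\bigl(p_i(e^t-1)\bigr)$ using $1+x\leq e^x$. Therefore $\mathbb{E}[e^{tX}] \leq \exp\bigl(\mu(e^t-1)\bigr)$. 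Plugging in $t = \ln(1+\delta)$ (the minimizer) yields
\[
\mathbb{P}(X \geq (1+\delta)\mu) \leq \left(\frac{e^{\delta}}{(1+\delta)^{1+\delta}}\right)^{\mu}.
\]
A completely analogous calculation, with $t<0$, gives the lower tail bound
\[
\mathbb{P}(X \leq (1-\delta)\mu) \leq \left(\frac{e^{-\delta}}{(1-\delta)^{1-\delta}}\right)^{\mu}.
\]

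The remaining step is the analytic simplification: one needs to show that, for $\delta\in(0,1)$, both rate functions are at least $\delta^2/3$, that is,
\[
(1+\delta)\ln(1+\delta) - \delta \geq \delta^2/3 \quad\text{and}\quad (1-\delta)\ln(1-\delta)+\delta \geq \delta^2/3.
\]
Both follow from Taylor expansion of $\ln(1\pm \delta)$ and an elementary monotonicity check of the resulting single-variable function on $(0,1)$. This is the only step with any real calculation; I would verify it by defining $f(\delta) := (1+\delta)\ln(1+\delta)-\delta-\delta^2/3$, computing $f(0)=f'(0)=0$, and showing $f''(\delta) = \tfrac{1}{1+\delta} - \tfrac{2}{3} \geq 0$ on $(0,1/2)$, with a separate direct check on $[1/2,1)$; the lower-tail inequality is handled similarly (and is in fact tighter). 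Combining the two tail bounds with a union bound yields
\[
\mathbb{P}(|X-\mu|>\delta\mu) \leq 2e^{-\delta^2\mu/3},
\]
as claimed.

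The main obstacle is purely computational rather than conceptual: squeezing the optimized Chernoff rate $(1\pm\delta)\ln(1\pm\delta)\mp\delta$ down to the clean constant $\delta^2/3$ uniformly on $(0,1)$. Everything else (Markov, independence, the $1+x\leq e^x$ inequality, the union bound) is routine.
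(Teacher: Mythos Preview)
Your proof is correct and follows the standard Chernoff--Cram\'er exponential-moment argument. The paper itself does not prove this lemma at all: it is stated as a well-known auxiliary result with a reference to~\cite{JLR}, so there is no ``paper's own proof'' to compare against; your write-up simply supplies the textbook derivation that the paper chose to cite rather than include.
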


Next, we will make extensive use of a powerful concentration inequality for low-degree polynomials of independent random variables, due to Kim and Vu~\cite{KimVuConcentration}. To state it formally, we first introduce some notation.

Let \( X_1, \ldots, X_m \) be independent random variables, and let \( \mathcal P(X_1, \ldots, X_m) \) be a polynomial of degree \( k \). For any subset \( A \subseteq [m] \), define
\[
\partial_A := \prod_{j \in A} \frac{\partial}{\partial X_j}
\]
to be the partial derivative operator with respect to the variables \( \{X_j : j \in A\} \). Define the expected partial derivatives by
\[
\mathbb{E}_j[\mathcal P] := \max\left\{ \mathbb{E}[\partial_A \mathcal P] : A \subseteq [m],\ |A| = j \right\}.
\]

The following theorem bounds the deviation of a polynomial from its expectation:

\begin{thm}[Kim--Vu~\cite{KimVuConcentration}]\label{lem: concentration inequality}
Let \( \mathcal P(X_1, \ldots, X_m) \) be a polynomial of degree \( k \) in independent, identically distributed random variables \( X_1, \ldots, X_m \). Then there exist some constants $a_k, b_k$ such that for any \( \ell \geq 1 \) we have 
\[
\mathbb{P}\left[ \left| \mathcal P - \mathbb{E}[\mathcal P] \right| \geq a_k\ell^k \cdot \sqrt{ \mathbb{E}_0[\mathcal P] \cdot \mathbb{E}_1[\mathcal P] } \right] 
\leq \exp\left( -\ell + (k - 1)\log m +b_k\right).
\] 
\end{thm}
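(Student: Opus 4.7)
The plan is to characterize non-edges via midpoints restricted to subcubes and then estimate the expected number of non-edge pairs. Throughout, write $p = 2^{-\delta_p n}$, so Part 1 corresponds to $\delta_p > \delta$ and Part 2 to $\delta_p < \delta$.

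\textbf{Key reduction.} For $x, y \in Q_p^n$ at Hamming distance $d$, the segment $[x,y]$ is an edge of $\conv(Q_p^n)$ iff $(x+y)/2 \notin \conv(Q_p^n \setminus \{x,y\})$. Any $z \in \{0,1\}^n$ appearing in such a convex combination must satisfy $z_j = x_j = y_j$ wherever $x$ and $y$ agree, so the question reduces to whether the center of the $d$-dimensional subcube $C_{x,y} \subseteq \{0,1\}^n$ (the $2^d$ points agreeing with $x,y$ outside their $d$ differing coordinates) lies in $\conv(Q_p^n \cap C_{x,y} \setminus \{x,y\})$. Setting $\alpha := d/n$, the expected number of $Q_p^n$-points in a fixed $d$-subcube is $\mu_d := 2^{(\alpha - \delta_p)n}$, which transitions from $o(1)$ (sparse) to $\omega(1)$ (dense) at $\alpha = \delta_p$.

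\textbf{Part 1 (clique).} I would union-bound the expected number of non-edge pairs, split by Hamming distance $\alpha n$. In the \emph{dense} regime $\alpha \geq \delta_p$, use $\mathbb{P}[\text{non-edge} \mid x,y \in Q_p^n] \leq 1$; the summand $\binom{n}{\alpha n} 2^{n-1} p^2 \sim 2^{(H(\alpha) + 1 - 2\delta_p)n}$ is maximized at $\alpha = \delta_p$ (since $H$ is decreasing on $[1/2,1]$ and $\delta_p > 1/2$), giving the critical exponent $H(\delta_p) + 1 - 2\delta_p < 0$ by the defining equation $H(\delta) = 2\delta - 1$ and $\delta_p > \delta$. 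In the \emph{sparse} regime $\alpha < \delta_p$, a non-edge requires a convex-combination witness in the subcube; since $\mu_d \ll 1$, the dominant witness type is an \emph{antipodal pair} $\{z_1, z_2\} \subseteq C_{x,y} \setminus \{x,y\}$ with $z_1 + z_2 = x+y$, yielding $\mathbb{P}[\text{non-edge} \mid x,y \in Q_p^n] \lesssim 2^{d-1} p^2$. The combined exponent $H(\alpha) + 1 + \alpha - 4\delta_p$ is maximized at $\alpha = 2/3$ with value $1 + \log_2 3 - 4\delta_p < 0$, since $\delta > (1+\log_2 3)/4 \approx 0.646$. Higher $k$-witness contributions are smaller by further factors of $\mu_d$.

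\textbf{Part 2 (non-clique).} By continuity I can choose $\alpha^* = \delta_p + \eta$ with $\eta > 0$ small enough that $H(\alpha^*) + 1 - 2\delta_p > 0$; this is possible because $H(\delta_p) + 1 - 2\delta_p > 0$ by hypothesis. For pairs at Hamming distance $\alpha^* n$ the expected count in $Q_p^n$ is $\omega(1)$, and conditional on $x, y \in Q_p^n$ the subcube has $\mu^* := 2^{\eta n} = \omega(1)$ expected extra points. A union bound over $O(3^d)$ relevant separating hyperplanes (normals in $\{-1,0,1\}^d$, capturing coordinate facets and half-cube cuts) shows $\mathbb{P}[\text{center} \notin \conv] \leq 3^d e^{-\Omega(\mu^*)} = o(1)$ since $\mu^* \gg d$. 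Hence $\mathbb{E}[\#\text{non-edges at distance } \alpha^* n] = \omega(1)$, and a second-moment computation (or Kim–Vu applied to an appropriately chosen witness-counting polynomial in the indicators $\mathbbm{1}_{z \in Q_p^n}$) then shows this count is $\omega(1)$ with high probability, yielding a non-edge.

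\textbf{Main obstacle.} The hardest step is the center-in-convex-hull estimate for random subsets of the $d$-cube: showing $\mathbb{P}[c \notin \conv(R)] = o(1)$ once the expected density exceeds $\Omega(d/2^d)$. The $\{-1,0,1\}^d$-normal separations cover coordinate facets and balanced cuts, and in our asymptotic regime $\mu^* \gg d$ even the crude $O(3^d)$-union bound suffices; still, rigorously ruling out separations via arbitrary normals requires carefully identifying the essentially different subsets of $\{0,1\}^d$ that can separate the center. A secondary subtlety, in Part 1's sparse regime, is showing that higher-$k$ witness contributions are uniformly dominated by the $k=2$ antipodal term; this follows from the telescoping ratios $\mathbb{E}[\#(k+2)\text{-witnesses}]/\mathbb{E}[\#k\text{-witnesses}] = O(\mu_d^2 / k^2) = o(1)$ in the sparse regime.
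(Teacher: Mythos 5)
Your proposal does not address the statement in question. The statement is the Kim--Vu polynomial concentration inequality: a general probabilistic estimate asserting that for any degree-$k$ polynomial $\mathcal P$ in independent random variables, the deviation $|\mathcal P - \mathbb{E}[\mathcal P]|$ exceeds $a_k \ell^k \sqrt{\mathbb{E}_0[\mathcal P]\,\mathbb{E}_1[\mathcal P]}$ with probability at most $\exp(-\ell + (k-1)\log m + b_k)$. What you have written instead is a proof sketch of the clique-threshold result (Theorem~\ref{thm: clique threshold}) for random $0/1$ polytope graphs --- entropy bounds over Hamming distances, the critical exponent $H(\delta) = 2\delta - 1$, witness pairs in subcubes, and so on. None of that bears on the concentration inequality itself; indeed your Part~2 explicitly invokes ``Kim--Vu applied to an appropriately chosen witness-counting polynomial,'' which would be circular if the goal were to establish Kim--Vu.

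Note also that the paper does not prove this statement: it is quoted as an external result from Kim and Vu's paper, and a genuine proof would require their machinery (iterated moment bounds or a martingale/induction-on-degree argument controlling the partial-derivative expectations $\mathbb{E}_j[\mathcal P]$), which is entirely absent from your writeup. If your intent was to prove the clique threshold, you have answered a different question than the one posed; if your intent was to prove the concentration inequality, the proposal contains no argument toward it.
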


\subsection{Combinatorial Characterization of Edges}

We begin with a combinatorial criterion for identifying edges (i.e., $1$-faces) in $0/1$-polytopes, which plays a central role throughout our analysis.
The following characterization of edges follows quickly from first principles of polytopes which we illustrate in Figure~\ref{fig:edge condition}.

\begin{obs} \label{def: edges}
Let $P$ be a polytope and let $x, y \in \mathrm{Vert}(P)$. Then, $x$ and $y$ form a $1$-face of $P$ if and only if
\[
\conv(x, y) \cap \conv(\Vert(P) \setminus \{x, y\}) = \varnothing.
\]
\end{obs}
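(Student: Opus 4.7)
My plan is to argue each direction directly from the definition of a face via supporting hyperplanes.

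For the forward implication, suppose $[x,y]$ is a $1$-face of $P$, and pick a supporting hyperplane $H = \{w : \langle a, w\rangle = b\}$ with $P \cap H = [x,y]$ and $P \subseteq \{w : \langle a, w\rangle \leq b\}$. Every vertex $z \in \Vert(P) \setminus \{x,y\}$ lies in $P$ but outside $H$, so $\langle a, z\rangle < b$ strictly; strict inequality is preserved by convex combinations, so $\conv(\Vert(P)\setminus\{x,y\}) \subseteq \{\langle a, \cdot\rangle < b\}$, while $[x,y] \subseteq H$. Disjointness follows immediately.

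For the converse, set $C := \conv(\Vert(P) \setminus \{x,y\})$ and assume $[x,y] \cap C = \varnothing$. The goal is to produce a supporting hyperplane of $P$ containing $x$ and $y$ but no other vertex of $P$; equivalently, I want a linear functional $a$ with $a \perp (y - x)$ and $\langle a, z - x\rangle < 0$ for every $z \in \Vert(P) \setminus \{x, y\}$. Writing $E := (y-x)^\perp$ and letting $\pi_E$ denote orthogonal projection onto $E$, the existence of such an $a \in E$ reduces, via strict separation of the origin from a polytope, to showing
\[
0 \notin \conv\bigl\{\, \pi_E(z - x) \,:\, z \in \Vert(P) \setminus \{x, y\} \,\bigr\}.
\]

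The main step is verifying this non-membership. A hypothetical relation $\sum_z \lambda_z \pi_E(z - x) = 0$ with $\lambda_z \geq 0$ and $\sum_z \lambda_z = 1$ lifts, since $\ker \pi_E = \spanset(y - x)$, to $\sum_z \lambda_z z = (1 - \mu)\, x + \mu\, y$ for some scalar $\mu$. A three-way case analysis on $\mu$ then yields a contradiction: if $\mu \in [0,1]$, the left side lies in $C$ and the right in $[x,y]$, contradicting disjointness; if $\mu > 1$, rearranging gives $y = \tfrac{\mu - 1}{\mu}\, x + \tfrac{1}{\mu} \sum_z \lambda_z z$, a convex combination of vertices of $P$ distinct from $y$, contradicting $y \in \Vert(P)$; and the case $\mu < 0$ is symmetric, expressing $x$ as a convex combination of vertices distinct from $x$. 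The only mildly nonobvious idea in the argument is to seek the separating functional inside the codimension-$1$ subspace $E$ rather than in $\RR^n$, which is what converts a generic strict separation of the compact convex sets $[x,y]$ and $C$ into a bona fide supporting hyperplane that contains both endpoints.
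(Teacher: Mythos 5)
The paper does not actually prove this Observation: it is stated as following ``quickly from first principles of polytopes,'' with Figure~2 as illustration, and no argument is supplied. Your proof fills in that gap, and it is correct. The forward direction is exactly the standard argument: a supporting hyperplane $H$ with $P \cap H = [x,y]$ strictly excludes every vertex other than $x$ and $y$, hence also strictly excludes $\conv(\Vert(P)\setminus\{x,y\})$, giving disjointness. For the converse, the key move of seeking the separating functional inside $E = (y-x)^\perp$ rather than in all of $\RR^n$ is the right one: a generic strict separation of the two disjoint compact convex sets $[x,y]$ and $\conv(\Vert(P)\setminus\{x,y\})$ would yield a hyperplane $H$ with $[x,y]$ strictly on one side, not a hyperplane containing $[x,y]$, so projecting to $E$ is what makes the construction produce a \emph{supporting} hyperplane through both endpoints. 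The three-way case analysis on $\mu$ correctly dispatches each possibility: $\mu \in [0,1]$ contradicts the disjointness hypothesis directly, while $\mu > 1$ (resp.\ $\mu < 0$) expresses $y$ (resp.\ $x$) as a proper convex combination of other vertices of $P$, contradicting extremality.

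One small step you leave implicit and could state explicitly: having found $a \in E$ with $\langle a, z - x\rangle < 0$ for every $z \in \Vert(P)\setminus\{x,y\}$, the hyperplane $H = \{w : \langle a, w - x\rangle = 0\}$ supports $P$ and contains $x$ and $y$, but you should also note that $P \cap H$ equals $[x,y]$ and not something larger. This follows because $P \cap H$ is a face, hence a polytope whose vertex set is exactly $\Vert(P) \cap H = \{x,y\}$, so $P \cap H = \conv\{x,y\} = [x,y]$; alternatively, any $w = \sum_v \mu_v v \in P \cap H$ forces $\mu_v = 0$ for $v \notin \{x,y\}$ since each $\langle a, v - x\rangle \le 0$ with equality only at $x, y$. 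With that spelled out, the proof is complete and is precisely the ``first principles'' argument the paper alludes to but omits.
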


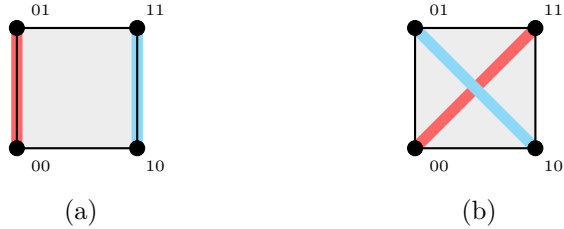
\begin{figure}[h]
    \centering
\begin{tabular}{cc}
    \subfloat[]{
   \begin{tikzpicture}[baseline={($ (current bounding box.west) - (0,1ex) $)},scale = .8]
   \tikzstyle{bk}=[circle, fill =black ,inner sep= 2 pt,draw]
\node (a) at (.4,2.3) {\tiny{$01$}};
\node (b) at (2.3, 2.3)  {\tiny{$11$}};
\node (c) at (2.3, -.3) {\tiny{$10$}};
\node (d) at (.4, -.3)  {\tiny{$00$}};
\fill[black!7] (2,0) to (0,0) to (0,2) to (2,2) to (2,0);
 \draw[line width = 1.5 mm, red!60] (0,0) to (0,2);
  \draw[line width = 1.5 mm, cyan!40] (2,0) to (2,2);
 \draw[thick] (2,0) to (0,0) to (0,2) to (2,2) to (2,0);
 \node (1) at (0,2) [bk] {};
\node (2) at (2,2) [bk] {};
\node (3) at (2,0) [bk] {};
\node (4) at (0,0) [bk] {};
    \end{tikzpicture}} \hspace{1 in}
    & 
        \subfloat[]{
   \begin{tikzpicture}[baseline={($ (current bounding box.west) - (0,1ex) $)},scale = .8 ]
      \tikzstyle{bk}=[circle, fill =black ,inner sep= 2 pt,draw]
\node (a) at (.4,2.3) {\tiny{$01$}};
\node (b) at (2.3, 2.3)  {\tiny{$11$}};
\node (c) at (2.3, -.3) {\tiny{$10$}};
\node (d) at (.4, -.3)  {\tiny{$00$}};
\fill[black!7] (2,0) to (0,0) to (0,2) to (2,2) to (2,0);
 \draw[line width = 1.5 mm, red!60] (0,0) to (2,2);
  \draw[line width = 1.5 mm, cyan!40] (2,0) to (0,2);
 \draw[thick] (2,0) to (0,0) to (0,2) to (2,2) to (2,0);
 \node (1) at (0,2) [bk] {};
\node (2) at (2,2) [bk] {};
\node (3) at (2,0) [bk] {};
\node (4) at (0,0) [bk] {};
    \end{tikzpicture}}
      \end{tabular}
    \caption{ (A) The edge $[00,01]$ is disjoint from the convex hull of the remaining vertices. 
  (B) The non-edge $[00,11] $ intersects the convex hull of the remaining vertices.  In particular, $((0,0) + (1,1))/2 = ((0,1) +(1,0))/2$.}
    \label{fig:edge condition}
\end{figure}

In other words, the segment $[x,y]$ is an edge of $P$ if it is not contained in the convex hull of the remaining vertices of $P$.
On the other hand, if $xy \notin E(G_P)$, then there exists a finite subset $Z \subseteq \Vert(P)\setminus \{x,y\}$ such that
\begin{equation} \label{eq: conv comb condition}
 \alpha x + (1 - \alpha)y = \sum_{z \in Z} \lambda(z) z,
\end{equation}
where $\alpha \in (0,1)$, $\lambda(z) > 0$ for all $z \in Z$, and $\sum_{z \in Z} \lambda(z) = 1$. That is, the point $\alpha x + (1 - \alpha)y$ lies in the convex hull of other vertices of $P$.
We have the following as an immediate consequence using a specific choice of $\lambda(z)$.

\begin{obs} \label{obs: edges}
Let $P$ be a polytope and let $x \neq y \in \Vert(P)$. Suppose there exist an integer $k \in \NN$ and vertices $z_1, \ldots, z_{2k} \in \Vert(P) \setminus \{x, y\}$ such that
\[
\frac{1}{2}x + \frac{1}{2}y = \frac{1}{2k} \sum_{i=1}^{2k} z_i.
\]
Then the segment $[x,y]$ is not an edge of $P$, i.e., $xy \notin E(G_P)$.
\end{obs}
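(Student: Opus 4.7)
The plan is to deduce the statement directly from the edge characterization of Observation~\ref{def: edges}. That characterization says $[x,y]$ is a $1$-face if and only if $\conv(x,y) \cap \conv(\Vert(P) \setminus \{x,y\}) = \varnothing$. So it suffices to exhibit a single point of $P$ that lies both on the segment $[x,y]$ and in the convex hull of the other vertices, and the hypothesis almost hands us such a point.

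The natural candidate is the midpoint $m := \tfrac{1}{2}x + \tfrac{1}{2}y$, which lies in $\conv(x,y)$ by definition (take $\alpha = 1/2$). By the hypothesis of the observation,
\[
m = \frac{1}{2k}\sum_{i=1}^{2k} z_i,
\]
which expresses $m$ as a convex combination of vertices $z_1,\ldots,z_{2k} \in \Vert(P)\setminus\{x,y\}$: all coefficients equal $1/(2k) > 0$ and sum to $1$. If some of the $z_i$ happen to coincide, merging repeated terms still yields a convex combination with strictly positive weights on each distinct vertex. In either case $m \in \conv(\Vert(P) \setminus \{x,y\})$, so the intersection $\conv(x,y) \cap \conv(\Vert(P) \setminus \{x,y\})$ is nonempty. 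Taking the contrapositive of Observation~\ref{def: edges} then gives $xy \notin E(G_P)$.

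There is no serious obstacle; the statement is essentially a repackaging of Observation~\ref{def: edges} specialized to an equal-weight convex combination summing to the midpoint $(x+y)/2$. The point of isolating this form is combinatorial and probabilistic rather than geometric: to certify that $xy$ is not an edge it is enough to find a $2k$-tuple of other sampled vertices averaging to $(x+y)/2$, and the number of such tuples in $Q_p^n$ is a low-degree polynomial in the indicator variables $\{\mathbbm{1}[z \in Q_p^n]\}_{z \in \mathcal{Q}^n}$. I expect this to be precisely the form exploited in the non-edge directions of Theorems~\ref{thm: edge-density} and~\ref{thm: clique threshold}, where its expectation can be estimated by entropy-style counting and its concentration controlled via the Kim--Vu inequality (Theorem~\ref{lem: concentration inequality}).
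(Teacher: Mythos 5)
Your proof is correct and matches the paper's own (implicit) argument: the paper presents the observation as an ``immediate consequence'' of Observation~\ref{def: edges} via a specific choice of weights, and your reduction --- the midpoint lies in $\conv(x,y)$ and, by the given equal-weight combination, also in $\conv(\Vert(P) \setminus \{x,y\})$, contradicting the edge criterion --- is exactly that consequence. Your remark about merging repeated $z_i$'s is a minor but legitimate point of care that the paper leaves tacit.
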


Now, assume that $\Vert(P) \subseteq \mathcal{Q}^n$. 
We recall that for $0/1$ vectors $x$ and $y$, the operations $\land$ and $\lor$ are interpreted coordinate-wise as 
\begin{align*}
&x(i) \land y(i) = 
    \begin{cases}
        1 & \text{ if } x(i) = 1 \text{ and } y(i) = 1 \\
        0 & \text{ else}
    \end{cases} \\
    & x(i) \lor y(i) = 
    \begin{cases}
        1 & \text{ if } x(i) = 1 \text{ or } y(i) = 1  \\
        0 & \text{ else}
    \end{cases}.
\end{align*}
It was observed in \cite{BondarenkoBrodskiiCliques} that the condition (\ref{eq: conv comb condition}) imposes a strong restriction: in such a convex combination, each $z \in Z$ must satisfy the coordinate-wise inequality
\[
x \land y \leq z \leq x \lor y.
\]

Motivated by this, we introduce the following definition. 
\begin{definition}Let $P $ be a $0/1$ polytope, and let $x,y\in \Vert(P)$. 
The set   of \emph{witnesses} is
\[
W_P(x, y) := \left\{ z \in V \setminus \{x, y\} \,\middle|\, x \land y \leq z \leq x \lor y \right\}.
\]
\end{definition}
We rephrase a lemma of \cite{BondarenkoBrodskiiCliques} in this language, which will be key in our analysis.

\begin{lemma}\cite[Lemma 1]{BondarenkoBrodskiiCliques}\label{lem: russian gadget}
Let $P := \conv(Q)$ with $Q \subseteq \mathcal{Q}^n := \{0,1\}^n$. Then,
\begin{enumerate}
    \item Given $x, y \in Q$, if $|W(x,y)| \leq 1$, then the segment $[x, y]$ is an edge of $P$.
    \item If $|W(x,y)| \leq 1$ for all $x, y \in Q$, then the polytope graph $G_P$ is a clique.
\end{enumerate}
\end{lemma}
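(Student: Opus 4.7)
The plan is to prove part~(1) first, since part~(2) is an immediate corollary: if $|W_P(x,y)|\le 1$ holds for every pair of distinct $x,y\in Q$, part~(1) makes each such pair span a $1$-face, so $G_P$ is a clique. For part~(1), I would fix distinct $x,y\in Q$ with $|W_P(x,y)|\le 1$ and argue by contradiction. If $[x,y]$ is not a $1$-face of $P$, then Observation~\ref{def: edges} supplies a nontrivial identity
\[
\alpha x + (1-\alpha)\, y \;=\; \sum_{z \in Z} \lambda(z)\, z,
\]
with $\alpha\in(0,1)$, $Z\subseteq \Vert(P)\setminus\{x,y\}$, positive weights $\lambda(z)$ summing to $1$, and every $z\in\{0,1\}^n$.

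The core step is to show that $Z\subseteq W_P(x,y)$, i.e.\ every $z\in Z$ satisfies the coordinate-wise sandwich $x\land y\le z\le x\lor y$. I would read the identity one coordinate at a time. At any index $i$ with $x(i)=y(i)=0$ (respectively $=1$), the left-hand side equals $0$ (respectively $1$), so $\sum_{z\in Z}\lambda(z)\,z(i)$ equals the same value; since $\lambda(z)>0$ and $z(i)\in\{0,1\}$, this pins $z(i)=x(i)$ uniformly across $z\in Z$. Coordinates with $x(i)\neq y(i)$ impose no constraint, because then $(x\land y)(i)=0$ and $(x\lor y)(i)=1$. Hence $Z\subseteq W_P(x,y)$, and by hypothesis $|Z|\le 1$.

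Only two subcases remain. The subcase $Z=\varnothing$ is immediately ruled out, since the right-hand side would be an empty sum while the weights are required to sum to $1$. The subcase $Z=\{z\}$ forces $z = \alpha x + (1-\alpha)\, y$; because $x\neq y$, some coordinate $i$ satisfies $x(i)\neq y(i)$, and evaluating the identity at that index produces $z(i)=\alpha\in(0,1)$, contradicting $z\in\{0,1\}^n$. I do not anticipate a substantial obstacle: the only non-mechanical ingredient is the coordinate-wise sandwich, which is exactly where the $0/1$ assumption enters essentially, and the rest is simple bookkeeping around Observation~\ref{def: edges}.
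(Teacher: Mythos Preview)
Your proposal is correct and follows essentially the same approach as the paper: both argue by contradiction via Observation~\ref{def: edges}, use the coordinate-wise analysis to obtain $Z\subseteq W_P(x,y)$, and then rule out $|Z|\le 1$ because a proper convex combination of two distinct $0/1$ points cannot equal a single $0/1$ point. Your write-up is in fact slightly more explicit than the paper's (you spell out the sandwich argument and the $Z=\varnothing$ case), with the only cosmetic imprecision being that at a coordinate with $x(i)\neq y(i)$ one gets $z(i)\in\{\alpha,1-\alpha\}$ rather than $z(i)=\alpha$, though either value lies in $(0,1)$ and the contradiction stands.
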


\begin{proof}
As observed in the discussion preceding the lemma, suppose that
\[
\alpha x + (1 - \alpha)y = \sum_{z \in Z} \lambda(z) z,
\]
for some $\alpha \in (0,1)$, where $\lambda(z) > 0$ for all $z \in Z$, and $\sum_{z \in Z} \lambda(z) = 1$. Then every $z \in Z$ is a witness, that is, $Z \subseteq W(x,y)$.

Moreover, since all points are $0/1$ vectors, the midpoint $(x + y)/2$ cannot itself be a $0/1$ vector unless $x = y$.
Hence, the convex combination in $Z$ must involve at least two distinct points.
Therefore, $|W(x,y)| \geq |Z| \geq  2$.
So if $|W(x,y)| \leq 1$, then no such convex combination exists.
Hence $[x,y]$ must be an edge of $P$, proving part (1).
Part (2) follows immediately by applying part (1) to all pairs $x,y \in Q$.
\end{proof}

\subsection{Typical and Atypical Points}

Our proof strategy involves classifying points in $\mathcal{Q}^n$ according to whether Lemma~\ref{lem: russian gadget} (or a generalization thereof) applies, and whether such points are likely to appear in the random subset ${Q}_p^n$. To formalize this, we introduce the following terminology.

\begin{definition}
Let $\alpha > 0$, and consider any two strings $x, y \in \mathcal{Q}^n$.
\begin{enumerate}
    \item We say that $y \in \{0,1\}^n$ is \emph{$(x, \alpha)$-typical} if the following holds:
    \begin{align*}
        \frac{|x|}{2} - \alpha n \ \leq\ &|x \land y| \ \leq\ \frac{|x|}{2} + \alpha n, \\
        \frac{n - |x|}{2} - \alpha n \ \leq\ &|(x \lor y)^c| \ \leq\ \frac{n - |x|}{2} + \alpha n.
    \end{align*}
    Otherwise, we say that $y$ is \emph{$(x, \alpha)$-atypical}.
    
    \item We say that the pair $(x, y)$ is an \emph{$\alpha$-typical pair} if $y$ is $(x, \alpha)$-typical and $x$ is $(y, \alpha)$-typical. Otherwise, we say that $(x, y)$ is an \emph{$\alpha$-atypical pair}.
\end{enumerate}
\end{definition}

We will make use of the following simple but useful lemma.

\begin{lemma} \label{lem: alpha typical pair}
Let $x, y \in \mathcal{Q}^n$. If $(x, y)$ is an $\alpha$-typical pair, then
\[
\frac{n}{2} - 2\alpha n \leq |x|, |y| \leq \frac{n}{2} + 2\alpha n, \quad \text{and} \quad \frac{n}{4} - 2\alpha n \leq |x \land y| \leq \frac{n}{4} + 2\alpha n.
\]

\end{lemma}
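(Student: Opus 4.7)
The plan is to reduce everything to the inclusion-exclusion identity
\[
|(x \lor y)^c| \;-\; |x \land y| \;=\; n - |x| - |y|,
\]
which follows from $|x \lor y| = |x| + |y| - |x \land y|$ and $|(x\lor y)^c|=n-|x\lor y|$. The left-hand side is controlled directly by the definition of $(x,\alpha)$-typicality of $y$, while the right-hand side is what we want to bound.

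First I will derive the bounds on $|y|$ using only the fact that $y$ is $(x,\alpha)$-typical (the half of the hypothesis involving $|x|$ on the right-hand sides of the typicality inequalities). Combining the upper bound $|x\land y|\le |x|/2+\alpha n$ with the lower bound $|(x\lor y)^c|\ge (n-|x|)/2-\alpha n$ and subtracting gives
\[
n-|x|-|y| \;=\; |(x\lor y)^c|-|x\land y| \;\ge\; \tfrac{n}{2}-|x|-2\alpha n,
\]
which rearranges to $|y|\le n/2+2\alpha n$. The opposite pairing of inequalities yields $|y|\ge n/2-2\alpha n$ in the same way. Then I will invoke the symmetric hypothesis, that $x$ is $(y,\alpha)$-typical, to get the analogous bounds $n/2-2\alpha n \le |x|\le n/2+2\alpha n$.

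Finally, for $|x\land y|$ I will simply plug the freshly established bound on $|x|$ back into the typicality inequality $|x|/2-\alpha n \le |x\land y|\le |x|/2+\alpha n$, which immediately gives
\[
\tfrac{n}{4}-2\alpha n \;\le\; |x\land y| \;\le\; \tfrac{n}{4}+2\alpha n.
\]
There is no real obstacle here; the lemma is a purely algebraic consequence of the two identities $|x\lor y|+|(x\lor y)^c|=n$ and $|x\lor y|+|x\land y|=|x|+|y|$, applied together with the four defining inequalities of $\alpha$-typicality, and the only thing to be careful about is matching upper bounds with lower bounds correctly when passing across the inclusion-exclusion identity.
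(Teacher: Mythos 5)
Your proof is correct and follows essentially the same route as the paper: both rely on the inclusion--exclusion identity $|(x \lor y)^c| = n - |x| - |y| + |x \land y|$, pair the lower bound on $|(x\lor y)^c|$ with the upper bound on $|x\land y|$ (and vice versa) to bound $|y|$, invoke symmetry for $|x|$, and then substitute back into the typicality inequality to bound $|x\land y|$. The only difference is cosmetic bookkeeping (you subtract two inequalities, the paper chains substitutions), so there is nothing further to address.
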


\begin{proof}
Assume that $(x, y)$ is an $\alpha$-typical pair. 
Recall that
\[
|(x \lor y)^c| = n - |x| - |y| + |x \land y|.
\]
Therefore, 
\begin{align*}
\frac{n - |x|}{2} - \alpha n &\leq  |(x \lor y)^c| = n - |x| - |y| + |x \land y| \\
\Rightarrow -\alpha n &\leq \frac{n}{2} - \frac{|x|}{2} - |y| + |x \land y| \\
&\leq \frac{n}{2} - |y| + \alpha n \quad \text{(since $|x \land y| \leq \frac{|x|}{2} + \alpha n$)} \\
\Rightarrow |y| &\leq \frac{n}{2} + 2\alpha n.
\end{align*}

Similar reasoning using the upper bound on $|(x \lor y)^c|$ and the symmetry between $x$ and $y$ shows
\[
\frac{n}{2} - 2\alpha n \leq |x|, |y| \leq \frac{n}{2} + 2\alpha n.
\]
The bounds on $|x \land y|$ follows by substituting in these values.
\end{proof}

In the following lemma, we show that for every \( x \in \mathcal{Q}^n \), the number of \((x, o(1))\)-atypical points is exponentially small in \( n \).

\begin{lemma}[Atypical neighbors are rare]\label{lem: atypical-pairs}
Let \( x \in \mathcal{Q}^n \) and let \( \alpha > 0 \). Then, there exists a constant \( c = c(\alpha) > 0 \) such that the number of points \( y \in \mathcal{Q}^n \) that are \((x, \alpha)\)-atypical is at most \( 2^{(1 - c)n} \).
\end{lemma}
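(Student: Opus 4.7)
The plan is to argue via a probabilistic-counting hybrid: interpret the desired count as $2^n \cdot \mathbb{P}[y \text{ is } (x,\alpha)\text{-atypical}]$, where $y$ is drawn uniformly at random from $\mathcal{Q}^n$, and then show that this probability is exponentially small in $n$ by a routine application of the Chernoff bound.

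First, fix $x \in \mathcal{Q}^n$ and set $k := |x|$. Partition the coordinates as $S_1 := \{i \in [n] : x(i) = 1\}$ and $S_0 := [n] \setminus S_1$, so that $|S_1| = k$ and $|S_0| = n-k$. For a uniformly random $y \in \mathcal{Q}^n$, the random variable $|x \land y|$ is exactly the number of ones of $y$ restricted to $S_1$, whereas $|(x \lor y)^c|$ is the number of zeros of $y$ restricted to $S_0$. Since $S_0$ and $S_1$ are disjoint, the two quantities are independent and each is binomial:
\[
|x \land y| \sim \mathrm{Bin}(k,\,1/2), \qquad |(x \lor y)^c| \sim \mathrm{Bin}(n-k,\,1/2),
\]
with means $k/2$ and $(n-k)/2$ respectively.

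Next, I apply Lemma~\ref{chernoff} to each of the two deviation events defining $(x,\alpha)$-atypicality. For the first event: if $k < 2\alpha n$, then $\bigl||x \land y| - k/2\bigr| \leq k/2 < \alpha n$ deterministically, so the event has probability $0$; otherwise, setting $\delta := 2\alpha n/k \in (0,1]$ so that $\alpha n = \delta \cdot (k/2)$, the Chernoff bound gives
\[
\mathbb{P}\bigl[\,\bigl||x \land y| - k/2\bigr| > \alpha n\,\bigr] \;\leq\; 2\exp\!\left(-\frac{2\alpha^2 n^2}{3k}\right) \;\leq\; 2\exp\!\left(-\frac{2\alpha^2 n}{3}\right),
\]
using $k \leq n$ in the last step. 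The symmetric argument applied to $|(x \lor y)^c|$ on the coordinates of $S_0$ yields the same bound. A union bound followed by multiplication by $|\mathcal{Q}^n| = 2^n$ then bounds the number of $(x,\alpha)$-atypical $y$ by $2^n \cdot 4 \exp(-2\alpha^2 n/3)$, which is at most $2^{(1-c)n}$ for any constant $c < 2\alpha^2/(3\ln 2)$ and all sufficiently large $n$.

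I do not anticipate any genuine obstacle here; the argument is a standard Chernoff concentration calculation. The only detail requiring care is the boundary regime where $k$ or $n-k$ is sub-$(2\alpha n)$, which is precisely where one of the two deviation events becomes vacuous and is handled deterministically, as above.
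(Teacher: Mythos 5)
Your proof is correct, and in fact it is more careful than the paper's own argument. The paper's proof reduces to counting $y$ whose Hamming distance from $x$ lies outside $[n/2 - 2\alpha n,\, n/2 + 2\alpha n]$, invoking (essentially) Lemma~\ref{lem: alpha typical pair} to say that typicality forces the Hamming distance into that interval, and then appealing to the contrapositive. But this contrapositive gives the wrong inclusion: it shows that far/near points are atypical, not that atypical points are far/near. Indeed, since $d(x,y) = n - |x \land y| - |(x \lor y)^c|$, a $y$ with $|x\land y|$ too large and $|(x\lor y)^c|$ too small (by offsetting amounts) is $(x,\alpha)$-atypical yet has Hamming distance squarely in the middle of the interval, so the set of atypical $y$ is strictly larger than the set of distance-outliers. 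Your decomposition avoids this entirely: you split coordinates into $\supp(x)$ and its complement, observe that $|x\land y|$ and $|(x\lor y)^c|$ are independent binomials $\mathrm{Bin}(|x|,1/2)$ and $\mathrm{Bin}(n-|x|,1/2)$ on those blocks, apply Chernoff to each deviation event separately (correctly handling the degenerate regime where one block has size below $2\alpha n$), and take a union bound. This bounds exactly the quantity the lemma asks about, and yields the explicit constant $c < 2\alpha^2/(3\ln 2)$. So your route is not merely a stylistic variant but patches a genuine (if easily repairable) gap in the paper's reduction.
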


\begin{proof}
Note that if a pair \( (x, y) \) is \(\alpha\)-typical then the Hamming distance between \( x \) and \( y \) is within the interval \([n/2 - 2\alpha n, n/2 + 2\alpha n]\). Hence, \( y \) is \((x, \alpha)\)-atypical if it differs from \( x \) in fewer than \( n/2 - 2\alpha n \) or more than \( n/2 + 2\alpha n \) coordinates.

The number of such points is the number of binary vectors of length \( n \) whose Hamming distance from \( x \) lies outside the interval \( [n/2 - 2\alpha n, n/2 + 2\alpha n] \). This corresponds to the total weight in the tails of the binomial distribution \( \mathrm{Bin}(n, 1/2) \). By standard Chernoff-type bounds, the total number of such vectors is at most
\[
\sum_{|i - n/2| > 2\alpha n} \binom{n}{i} \leq 2^{(1 - c)n}
\]
for some constant \( c = c(\alpha) > 0 \).
\end{proof}

In the following lemma, we show that in a typical instance of \( Q_p^n \), for every fixed \( x \in Q_p^n \), most points \( y \in Q_p^n \) form \((x, o(1))\)-typical pairs.

\begin{lemma} \label{lem: no too many atypical}
Let \( \alpha > 0 \) and let \( 0 \leq p = p(n) \leq 1 \). Then, there exists a constant \( \beta = \beta(\alpha) > 0 \) such that, with high probability, the following holds:
\begin{enumerate}
    \item If \( 0 \leq p \leq 2^{-(1 - \beta)n} \), then for all \( x, y \in Q_p^n \), \( y \) is \((x, \alpha)\)-typical.
    
    \item If \( 2^{-(1 - \beta)n} \leq p \leq 1 \), then for every \( x \in \mathcal{Q}^n \), the number of points \( y \in Q_p^n \) that are \((x, \alpha)\)-atypical is \( o(2^n p) \).
\end{enumerate}
\end{lemma}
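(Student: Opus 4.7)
\textbf{Proof proposal for Lemma~\ref{lem: no too many atypical}.}

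The plan is to reduce both parts to Lemma~\ref{lem: atypical-pairs}, which says that for every $x \in \mathcal{Q}^n$ the number of $(x,\alpha)$-atypical $y \in \mathcal{Q}^n$ is at most $2^{(1-c)n}$ for some constant $c = c(\alpha) > 0$. I would fix $\beta = \beta(\alpha)$ to be any constant in the open interval $(0, c/2)$, for instance $\beta := c/3$. Part~(1) will follow from a first-moment argument, and part~(2) from a Chernoff upper-tail bound for a binomial, followed by a union bound over all $x \in \mathcal{Q}^n$.

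For part~(1), I would count ordered $\alpha$-atypical pairs in $Q_p^n$. Summing over $x \in \mathcal{Q}^n$ the number of atypical $y$, and then weighting by the probability $p^2$ that both $x, y \in Q_p^n$, the expected number of such pairs is at most
\[
2^n \cdot 2^{(1-c)n} \cdot p^2 = 2^{(2-c)n} p^2 \leq 2^{(2\beta - c)n},
\]
which is $o(1)$ since $\beta < c/2$. Markov's inequality then gives that with high probability no $\alpha$-atypical pair is contained in $Q_p^n$.

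For part~(2), I would fix $x \in \mathcal{Q}^n$ and set $X_x := |\{y \in Q_p^n : y \text{ is }(x,\alpha)\text{-atypical}\}|$, which is a sum of independent Bernoulli$(p)$ variables with mean $\mu_x \leq 2^{(1-c)n} p$. Using the standard upper-tail Chernoff bound in the form $\mathbb{P}[X \geq t] \leq (e\mu/t)^t$ for $t \geq \mu$, I would choose
\[
t_x := \max\bigl(2\mu_x,\ Cn\bigr)
\]
for a sufficiently large absolute constant $C$; this yields $\mathbb{P}[X_x \geq t_x] \leq 2^{-2n}$. A union bound over the $2^n$ choices of $x$ shows that with high probability $X_x \leq t_x$ for every $x \in \mathcal{Q}^n$ simultaneously. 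Since $p \geq 2^{-(1-\beta)n}$, we have $2^n p \geq 2^{\beta n}$, and so
\[
\frac{t_x}{2^n p} \leq \max\!\left(\frac{2 \mu_x}{2^n p},\ \frac{Cn}{2^n p}\right) \leq \max\!\left(2 \cdot 2^{-cn},\ \frac{Cn}{2^{\beta n}}\right) = o(1),
\]
which is exactly the desired bound $X_x = o(2^n p)$.

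The only delicate point is picking a single $\beta$ that works for both parts: part~(1) pushes $\beta$ down (one needs $2\beta < c$ for the first-moment estimate to decay), while part~(2) only needs $\beta > 0$ so that $|Q_p^n|$ grows exponentially, making the additive slack $Cn$ negligible compared to $2^n p$. Consequently any $\beta \in (0, c/2)$ serves both. I do not expect a hard obstacle once Lemma~\ref{lem: atypical-pairs} is in hand; the work is essentially a clean Markov bound followed by a Chernoff-plus-union-bound.
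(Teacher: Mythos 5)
Your overall strategy matches the paper's: reduce to Lemma~\ref{lem: atypical-pairs}, use a first-moment/Markov bound for part~(1), and use a Chernoff upper tail plus a union bound over all $x \in \mathcal{Q}^n$ for part~(2). Part~(1) is essentially identical to the paper (the paper also takes $\beta = c/3$).

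For part~(2), however, there is a technical gap. You invoke the Chernoff bound in the form $\mathbb{P}[X \geq t] \leq (e\mu/t)^t$ and set $t_x := \max(2\mu_x,\ Cn)$. This form only decays when $t > e\mu$. With $t_x = 2\mu_x$ the bound reads $(e/2)^{2\mu_x}$, and since $e/2 > 1$ this is trivial (in fact blows up); it cannot give $2^{-2n}$ no matter how large $C$ is, because this regime is exactly when $\mu_x \geq Cn/2$ is large. Even in the regime $t_x = Cn$, you only know $\mu_x < Cn/2$, so $e\mu_x/(Cn)$ can be as large as $e/2 > 1$, and the bound again fails when $\mu_x$ is close to $Cn/2$. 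The fix is easy: either use the sharper form $\mathbb{P}[X \geq t] \leq e^{-\mu}(e\mu/t)^t$ (which \emph{does} yield $(e/4)^{\mu}$ at $t = 2\mu$), or keep your weak form but enlarge the multiplicative part of the threshold to, say, $e^2 \mu_x$ so that $(e\mu_x/t_x)^{t_x} \leq e^{-t_x} \leq e^{-Cn}$ in both regimes. Either way $t_x$ remains $o(2^n p)$, so the conclusion is unaffected.

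For reference, the paper's proof sidesteps this delicacy entirely by picking a much larger threshold $t = \varepsilon\cdot 2^n p$: because $2^n p / \mu_x \geq 2^{cn}$ is exponentially large, the ratio $e\mu_x/t$ is automatically $\ll 1$, and the weak form $\binom{N}{t}p^t \leq (e\mu/t)^t$ gives a $2^{-\omega(n)}$ bound directly with no case split. That is slightly cleaner, though your version (once repaired) has the virtue of producing the explicit, smaller bound $t_x$ on the number of atypical neighbors rather than just $\varepsilon\cdot 2^n p$.
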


\begin{proof}
Fix \( x \in \mathcal{Q}^n \), and let
\[
\mathcal{A}(x) := \left\{ y \in \mathcal{Q}^n \mid y \text{ is } (x, \alpha)\text{-atypical} \right\}.
\]
Define the random variable
\[
Z_x := |\mathcal{A}(x) \cap Q_p^n|,
\]
which satisfies \( Z_x \sim \mathrm{Bin}(|\mathcal{A}(x)|, p) \). By Lemma~\ref{lem: atypical-pairs}, there exists a constant \( c = c(\alpha) > 0 \) such that
\[
|\mathcal{A}(x)| \leq 2^{(1 - c)n}.
\]
Hence,
\(
\mathbb{E}[Z_x] \leq 2^{(1 - c)n} p.
\)

\smallskip

To prove part (1), note that the expected number of pairs \( (x, y) \in Q_p^n \times Q_p^n \) with \( y \) being $(x,\alpha)$-atypical is:
\[
\sum_{x \in \mathcal{Q}^n} \mathbb{E}[Z_x] \cdot \mathbb{P}[x \in Q_p^n] \leq 2^n \cdot 2^{(1 - c)n} p^2 = 2^{(2 - c)n} p^2.
\]
If \( p \leq 2^{-(1 - c/3)n} \), then this expected value is \( o(1) \). By Markov’s inequality, with high probability there are no such pairs in \( Q_p^n \). Setting \( \beta := c/3 \) completes the proof of part (1).

\smallskip

For part (2), suppose now that \( p \geq 2^{-(1 - \beta)n} = 2^{-(1 - c/3)n} \), and fix any \( \varepsilon > 0 \). Note that since \( \varepsilon \cdot 2^n p = \omega(1) \), standard large deviation bounds for binomial random variables show that
\[
\mathbb{P}\left[ Z_x \geq \varepsilon \cdot 2^n p \right] 
\leq \binom{2^{(1 - c)n}}{\varepsilon \cdot 2^n p} \cdot p^{\varepsilon \cdot 2^n p}
\leq \left( \frac{e \cdot 2^{(1 - c)n}}{\varepsilon \cdot 2^n} \right)^{\varepsilon \cdot 2^n p}
= 2^{-\omega(n)}.
\]
Applying a union bound over all \( x \in \mathcal{Q}^n \) (of size \( 2^n \)), we conclude that, with high probability, no vertex \( x \in \mathcal{Q}^n \) has more than \( \varepsilon \cdot 2^n p = o(2^n p) \) atypical neighbors in \( Q_p^n \), completing the proof.
\end{proof}

\section{Proofs}

In this section, we prove all of our main theorems. 

\subsection{Proof of Theorem \ref{thm: edge-density} (edge-density).}

\subsubsection*{Proof of Part (1)}

Let \( \varepsilon, \alpha > 0 \) and \( k \in \mathbb{N} \) such that
\[
0 < 100\alpha < \frac{1}{k} < \frac{\varepsilon}{100},
\]
and suppose \( p \leq \left((1 - \varepsilon)/\sqrt{2}\right)^n \). We will show that, with high probability, the polytope graph \( G_p \) satisfies \( d(G_p) = 1 - o(1) \).
To this end, we establish the following:

\begin{enumerate}[(a)]

    \item The number of \(\alpha\)-atypical pairs in \( \mathcal{Q}^n \) is at most \( 2^{(2 - c)n} \) for some constant \( c = c(\alpha) > 0 \).
    
    \item For any \(\alpha\)-typical pair \( x, y \in \mathcal{Q}^n \), we have
    \[
    \mathbb{P}[xy \notin E(G_p) \mid x, y \in Q_p^n] = o(1).
    \]
    
    \item With high probability, \( |Q_p^n| = (1 - o(1)) \cdot 2^n p \).
    
    \item Let \( X \) denote the number of missing edges in \( G_p \). With high probability,
    \[
    X = o(2^{2n}p^2) = o(|Q_p^n|^2).
    \]

\end{enumerate}
These four statements together imply that \( G_p \) is nearly complete, i.e., \( d(G_p) = 1 - o(1) \), completing the proof of part~(1).

\smallskip

(a) follows directly from Lemma~\ref{lem: atypical-pairs} by summing over all \( x \in \mathcal{Q}^n \).

We next consider (b). Fix an \(\alpha\)-typical pair \( x, y \in \mathcal{Q}^n \). By Lemma~\ref{lem: alpha typical pair}, the number of coordinates on which \( x \) and \( y \) agree is at least \( n/2 - 4\alpha n \). Thus, the number of vertices \( z \in \mathcal{Q}^n \) satisfying
\[
x \land y \leq z \leq x \lor y
\]
is at most \( 2^{n/2 + 4\alpha n} \). A union bound shows that the probability that at least two such vertices \( z,w \in Q_p^n \) exist is at most
\[
2^{n + 8\alpha n} \cdot p^2 = 2^{8\alpha n} \cdot (1 - \varepsilon)^{2n} = o(1),
\]
provided \( \alpha \) is sufficiently small relative to \( \varepsilon \). In particular, if no such distinct \( z,w \) exist, then \( |W(x, y)| \leq 1 \), and by Lemma~\ref{lem: russian gadget}, it follows that \( xy \in E(G_p) \).

(c) follows from a standard application of the Chernoff bound.

For (d), let \( X \) be the number of missing edges in \( G_p \). By linearity of expectation,
\[
\mathbb{E}[X] = \sum_{\substack{(x, y)\ \text{typical}}} \mathbb{P}[xy \notin E(G_p)] + \sum_{\substack{(x, y)\ \text{atypical}}} \mathbb{P}[xy \notin E(G_p)].
\]
Using the identity \( \mathbb{P}[A] = \mathbb{P}[A \mid B]  \mathbb{P}[B] \) with the bounds above, we obtain
\[
\mathbb{P}[xy \notin E(G_p)] = \mathbb{P}[xy \notin E(G_p) \mid x, y \in Q_p^n] \cdot \mathbb{P}[x, y \in Q_p^n] = o(1) \cdot p^2
\]
for \(\alpha\)-typical pairs \( (x, y) \). Since the number of \(\alpha\)-atypical pairs is at most \( 2^{(2 - c)n} \), each of which appears in \( Q_p^n \) with probability \( p^2 \), we conclude:
\[
\mathbb{E}[X] \leq o(1) \cdot 2^{2n}p^2 + 2^{(2 - c)n}p^2 = o(2^{2n}p^2),
\]
as desired. Applying Markov’s inequality completes the proof of part~(1).

\subsubsection*{Proof of Part (2)}

Let \( \varepsilon > 0 \) and suppose \( p \geq \left((1 + \varepsilon)/\sqrt{2}\right)^n \). Our goal is to show that, with high probability, the polytope graph \( G_p \) satisfies \( d(G_p) = o(1) \).

We use the same parameters \( \alpha, k \) as in part~(1). We will show that for every \(\alpha\)-typical pair \( (x, y) \in \mathcal{Q}^n \), the following holds:
\begin{equation} \label{gadget}
    \mathbb{P}\left[\exists\, z_1, \ldots, z_{2k} \in Q_p^n \text{ such that } \frac{1}{2}(x + y) = \frac{1}{2k} \sum_{j=1}^{2k} z_j\right] = 1 - o(1).
\end{equation}

By Lemma~\ref{lem: russian gadget}, the existence of such points \( z_1, \ldots, z_{2k} \in Q_p^n \) implies \( xy \notin E(G_p) \). Thus, Equation~\eqref{gadget} implies
\[
\mathbb{P}[xy \in E(G_p) \mid x, y \in Q_p^n] = o(1).
\]

Let \( Y \) be the number of edges in \( G_p \). Repeating the logic from part~(1), we have:
\[
\mathbb{E}[Y] = \sum_{\substack{(x, y)\ \text{typical}}} \mathbb{P}[xy \in E(G_p)] + \sum_{\substack{(x, y)\ \text{atypical}}} \mathbb{P}[xy \in E(G_p)],
\]
and hence
\[
\mathbb{E}[Y] \leq o(1) \cdot 2^{2n}p^2 + 2^{(2 - c)n}p^2 = o(|Q_p^n|^2),
\]
where we use the fact that \( |Q_p^n| = \Theta(2^n p) \) with high probability. By Markov’s inequality, we conclude that \( Y = o(|Q_p^n|^2) \) with high probability, and hence \( d(G_p) = o(1) \), completing the proof of part~(2), modulo the proof of Equation~\eqref{gadget}.

\subsubsection*{Proof of Equation~\eqref{gadget}}

To prove Equation~\eqref{gadget}, we define a polynomial that counts the number of witness tuples \( (z_1, \ldots, z_{2k}) \subseteq Q_p^n \) such that
\[
\frac{1}{2}(x + y) = \frac{1}{2k} \sum_{j=1}^{2k} z_j.
\]
For each \( v \in \mathcal{Q}^n \), let \( X_v \) be the indicator for the event \( v \in Q_p^n \). Define a polynomial
\[
\mathcal{P}\big((X_v)_{v \in \mathcal{Q}^n}\big) := \sum X_{z_1} \cdots X_{z_{2k}},
\]
where the sum is over all tuples \( (z_1, \ldots, z_{2k}) \in (\mathcal{Q}^n)^{2k} \) (in a fixed order, say lexicographic) satisfying the averaging condition above. Each valid tuple contributes one monomial, and each monomial appears exactly once.

We aim to show that \( \mathbb{E}[\mathcal{P}] = \omega(1) \), and that with high probability, \( \mathcal{P}>0 \). To do so, we apply Theorem~\ref{lem: concentration inequality} (the Kim–Vu polynomial concentration inequality).

\vspace{.1 in}

{\bf Lower bound on \( \mathbb{E}[\mathcal{P}] \).}
For each coordinate \( i \in [n] \), let \( x(i) \), \( y(i) \) denote the \( i \)th bits of \( x \) and \( y \), respectively. If \( x(i) \neq y(i) \), then the \( i \)th coordinate of \( (x + y)/2 \) is \( 1/2 \), so we require that among the \( 2k \) values \( z_j(i) \), exactly \( k \) are 0 and \( k \) are 1.

This constraint must hold independently for each such coordinate. Since \( (x, y) \) is \( \alpha \)-typical, the number of coordinates where \( x(i) \neq y(i) \) lies in the interval \( \left[ n/2 - 2\alpha n,\ n/2 + 2\alpha n \right] \). Therefore, the number of valid tuples is at least
\[
\binom{2k}{k}^{n/2 - 2\alpha n},
\]
and each tuple appears with probability \( p^{2k} \) . Thus we have
\begin{align*}
\mathbb{E}[\mathcal{P}] 
&\geq  \binom{2k}{k}^{n/2 - 2\alpha n} \cdot p^{2k} \\
&\geq   \left( \frac{2^{2k}}{k} \right)^{n/2 - 2\alpha n} \cdot \left( \frac{1 + \varepsilon}{\sqrt{2}} \right)^{2kn} \\
&=   2^{[2k\log_2(1 + \varepsilon) - (1/2 - 2\alpha)\log_2 k - 4\alpha]n}.
\end{align*}
This tends to infinity as \( n \to \infty \), provided \( \alpha \) is sufficiently small relative to \( \varepsilon \), and \( k \) is sufficiently large relative to \( 1/\varepsilon \). Thus, \( \mathbb{E}[\mathcal{P}] = \omega(1) \).

\vspace{.1 in}

{\bf Upper bound on \( \mathbb{E}_1[\mathcal{P}] \).}
To apply Theorem~\ref{lem: concentration inequality}, we bound the first-order partial derivatives. For each \( z \in \mathcal{Q}^n \), \( \frac{\partial \mathcal{P}}{\partial X_z} \) counts the number of monomials in \( \mathcal{P} \) containing \( X_z \), i.e., the number of valid tuples \( (z_1, \ldots, z_{2k}) \) where some \( z_j = z \) and the averaging condition is satisfied.

Fixing such a tuple and removing \( z \), the remaining \( 2k - 1 \) vectors must collectively satisfy a slightly altered version of the same constraint. For each differing coordinate \( i \), we now require exactly \( k - 1 \) of the remaining vectors to take the same value as of \( z(i) \). The number of such configurations is at most
\[
\binom{2k - 1}{k - 1}^{n/2 + 2\alpha n},
\]
and each contributes \( p^{2k - 1} \) to the expectation. Hence,
\begin{align*}
\mathbb{E}_1[\mathcal{P}] 
&\leq \binom{2k - 1}{k - 1}^{n/2  + 2\alpha n} \cdot p^{2k - 1} \\
&= \left( \frac{1}{2} \binom{2k}{k} \right)^{n/2 + 2\alpha n} \cdot p^{2k - 1}.
\end{align*}

Now we express this in terms of \( \mathbb{E}[\mathcal{P}] \):
\begin{align*}
\mathbb{E}_1[\mathcal{P}]
&\leq \left(\frac{1}{2}\right)^{n/2 + 2\alpha n} \cdot \binom{2k}{k}^{4\alpha n} \cdot p\cdot \mathbb{E}[\mathcal{P}] \\
&\leq (1 - \beta)^n \cdot \mathbb{E}[\mathcal{P}]
\end{align*}
for some constant \( \beta = \beta(\varepsilon) > 0 \), since \( \alpha k < 1/100 \) by assumption.

\vspace{.1 in}

{\bf Concentration.}
We now apply Theorem~\ref{lem: concentration inequality}. Since
\[
\sqrt{\mathbb{E}_0[\mathcal{P}] \cdot \mathbb{E}_1[\mathcal{P}]} \leq (1 - \beta)^{n/2} \cdot \mathbb{E}[\mathcal{P}],
\]
taking (say) \( \ell := (1 + \beta)^{n / (5k)} \), we obtain
\begin{align*}
\mathbb{P}[\mathcal{P} = 0] 
&\leq \mathbb{P}\left[ |\mathcal{P} - \mathbb{E}[\mathcal{P}]| \geq \mathbb{E}[\mathcal{P}] \right] \\
&\leq  \mathbb{P}\left[ |\mathcal{P} - \mathbb{E}[\mathcal{P}]| \geq a_k\ell^{2k} \cdot \sqrt{\mathbb{E}_0[\mathcal{P}] \cdot \mathbb{E}_1[\mathcal{P}]} \right] \\
&\leq \exp\left[ -\ell + (2k - 1)n +O(1)\right] = o(1).
\end{align*}

Therefore, \( \mathcal{P} > 0 \) with high probability, and hence an \(\alpha\)-typical pair \( x, y \) is with high probability not an edge in \( G_p \). This completes the proof of Equation~\eqref{gadget} and thus of Theorem~\ref{thm: edge-density}.

\subsection{Proof of Theorem~\ref{thm: edge-expansion} (High Degree)}

We aim to show that for sufficiently large \( n \), with high probability,
\[
\deg(v) = (1 - o(1)) \cdot |V(G_p)| \qquad \text{for all } v \in V(G_p).
\]

Recall from Lemma~\ref{lem: russian gadget} that for \( x, y \in Q_p^n \), if \( |W(x, y)| \leq 1 \), then \( x \) and \( y \) form an edge in \( G_p \). Thus, it suffices to show that, with high probability, for every \( x \in Q_p^n \), the number of pairs \( (y, z) \in Q_p^n \times Q_p^n \) such that \( z \in W(x, y) \) is \( o(2^n p) = o(|Q_p^n|) \). This will immediately imply the desired degree bound.

We restrict attention to pairs \( (x, y) \) where \( y \) is \((x, \alpha)\)-typical for some small constant \( \alpha > 0 \), since by Lemma~\ref{lem: no too many atypical}, with high probability, each \( x \in Q_p^n \) has only \( o(|Q_p^n|) \) atypical neighbors. 
Fix \( x \in \mathcal{Q}^n \), and define a polynomial \( \mathcal{P} \) that counts the number of such witness pairs:
\[
\mathcal{P} := \sum X_y X_z,
\]
where the sum ranges over all \( y \in \mathcal{Q}^n \) that are \((x, \alpha)\)-typical, and all \( z \in W(x, y) \) such that the Hamming distance between \( z \) and \( x \) is at least \( \delta n \), for some fixed constant \( \delta > 0 \) (to be chosen later in terms of \( \varepsilon \)).

Each monomial \( X_y X_z \) is an indicator for the event that both \( y \) and \( z \) belong to \( Q_p^n \), and that \( z \in W(x, y) \). The lower bound on the Hamming distance is crucial to ensure a good upper bound on \( \mathbb{E}_1[\mathcal{P}] \) for applying the Kim–Vu concentration inequality. We now show that, with high probability, no pair of vertices in \( Q_p^n \) is closer than \( \delta n \), which justifies this restriction.

\paragraph{\textbf{Bounding Close Pairs.}}
Let \( \mathcal{E} \) be the event that there exists a pair \( x \neq y \in Q_p^n \) with Hamming distance at most \( \delta n \). Using the binary entropy function \( H(\delta) \),
\[
\mathbb{P}[\mathcal{E}] \leq 2^n \cdot \binom{n}{\delta n} \cdot p^2 \leq 2^{(1 + H(\delta)) n} \cdot (1 - \varepsilon)^{2n},
\]
which is \( o(1) \) for small enough \( \delta = \delta(\varepsilon) \). This justifies excluding such pairs from \( \mathcal{P} \).

We now establish the following two claims:
\begin{enumerate}
    \item \( \mathbb{E}[\mathcal{P}] = o(|Q_p^n|) \),
    \item \( \mathbb{P}[\mathcal{P} > \varepsilon \cdot |Q_p^n|] = 2^{-\omega(n)} \).
\end{enumerate}

A union bound over all \( x \in \mathcal{Q}^n \) (of size \( 2^n \)) then completes the proof.

\paragraph{{\bf Proof of (1): Bounding \( \mathbb{E}[\mathcal{P}] \).}}
There are at most \( 2^n \) choices for \( y \in \mathcal{Q}^n \) that are \((x,\alpha)\)-typical. For each such \( y \), any \( z \in W(x, y) \) must satisfy \( x(i) = z(i) \) for every \( i \) where \( x(i) = y(i) \). Since the number of such indices is at least \( n/2 - 2\alpha n \), the number of degrees of freedom in choosing \( z \) is at most \( n/2 + 2\alpha n \), so
\[
|W(x, y)| \leq 2^{n/2 + 2\alpha n}.
\]
Each pair \( (y, z) \) appears in \( \mathcal{P} \) with probability \( p^2 \). Therefore,
\[
\mathbb{E}[\mathcal{P}] \leq 2^n \cdot 2^{n/2 + 2\alpha n} \cdot p^2 = 2^{n/2 + 2\alpha n} \cdot (1 - \varepsilon)^{2n},
\]
which is \( o(2^n p) \) for small enough \( \alpha < \varepsilon/10 \).

\paragraph{{\bf Proof of (2): Concentration.}} We now seek to apply the Kim–Vu polynomial concentration inequality (Theorem~\ref{lem: concentration inequality}). Recall that \( \mathbb{E}_1[\mathcal{P}] \) is the maximum expected value of any first-order partial derivative. There are two types of partial derivative to consider: $y$ which are $(x,\alpha)$-typical and witnesses $z$ which are far from $x$ in Hamming distance. 

\textbf{Case 1.} Fix a typical \( y \), and consider \( \partial \mathcal{P} / \partial X_y \). The number of valid \( z \in W(x, y) \) with \( \operatorname{dist}(x, z) \geq \delta n \) is at most \( 2^{n/2 + 2\alpha n} \). Hence,
\[
\mathbb{E}\left[\frac{\partial \mathcal{P}}{\partial X_y}\right] \leq 2^{n/2 + 2\alpha n} \cdot p = 2^{2\alpha n} \cdot (1 - \varepsilon)^n = o(1).
\]

\textbf{Case 2.} Fix a vertex \( z \) with \( \operatorname{dist}(x, z) \geq \delta n \), and consider \( \partial \mathcal{P} / \partial X_z \). For \( z \in W(x, y) \), it must hold that \( x \land y \leq z \leq x \lor y \), implying that \( y \in [x \land z, x \lor z] \). Since \( z\) is far from \( x \), this interval is of size at most \( 2^{(1 - \delta) n} \). Therefore,
\[
\mathbb{E}\left[\frac{\partial \mathcal{P}}{\partial X_z}\right] \leq 2^{(1 - \delta) n} \cdot p = 2^{-\delta n} \cdot 2^n p.
\]

Taking the maximum over both cases gives:
\(
\mathbb{E}_1[\mathcal{P}] \leq 2^{-\delta n} \cdot 2^n p\), so 
\begin{align*}
    \sqrt{\mathbb{E}[\mathcal{P}] \cdot \mathbb{E}_1[\mathcal{P}]} &\leq 2^n \cdot p^{1.5} \cdot 2^{n/4 + \alpha n - \delta n/2} \\
    & \leq 2^n\cdot  p \cdot (1-\varepsilon)^{n/2} \cdot 2^{-n/4} \cdot 2^{n/4 + \alpha n - \delta n/2}\\
    & \leq    2^{-\delta n / 2} \cdot 2^n  \cdot p, 
\end{align*}
if  $\alpha$ is small enough with respect to $\ep.$

Applying Theorem~\ref{lem: concentration inequality} with, say, \( \ell = (1 + \delta/2)^{n/3} \), we obtain
\[
\mathbb{P}\left[\left| \mathcal{P} - \mathbb{E}[\mathcal{P}] \right| \geq \varepsilon \cdot 2^n p \right] \leq \exp\left(-\ell + \log(2^n) + O(1)\right) = 2^{-\omega(n)}.
\]

\paragraph{\textbf{Final Union Bound.}}
The number of vertices in \( Q_p^n \) is \( |Q_p^n| \leq 2^n p \). Combining the tail bound and the estimate on \( \mathcal{E} \), we conclude:
\[
\mathbb{P}\left[\exists x \in Q_p^n \text{ with } \deg(x) \geq \varepsilon \cdot 2^n p\right] \leq 2^n p \cdot 2^{-\omega(n)} + o(1) = o(1).
\]

\textit{Remark.} The tail bound from Kim–Vu is doubly exponential in \( n \), while a union bound adds only an exponential factor. Thus it is straightforward to control the global deviation over all vertices.

\subsection{Proof of Theorem \ref{thm: clique threshold} (1) (Clique)}

We begin by showing that for \( p \leq 2^{-0.8305n} \), the graph \( G_p \) is, with high probability, a clique. This is already an improvement of the current best bound. We then refine this estimate to obtain the sharp threshold.

By Lemma~\ref{lem: russian gadget} (2), it suffices to establish that for all \( x, y \in Q_p^n \), we have \( |W(x,y)| \leq 1 \).
For \( \delta \in [0,1] \), define \( \mathcal{E}_{\delta} \) as the set of tuples \( (x, y, u, v) \) such that the Hamming distance between \( x \) and \( y \) is exactly \( \delta n \), and \( u, v \in W(x,y) \).
There are \( 2^n \) choices for \( x \), and given \( x \), the number of points \( y \) at distance exactly \( \delta n \) from \( x \) is at most
\[
\binom{n}{\delta n} \leq 2^{H(\delta)n},
\]
where \( H(\delta) = -\delta \log_2 \delta - (1-\delta) \log_2 (1-\delta) \) is the binary entropy function. Since \( x \) and \( y \) differ on exactly \( \delta n \) coordinates, the number of possible pairs \( (u,v) \) with \( x \land y \leq u,v \leq x \lor y \) is \( 2^{2\delta n} \). Hence,
\[
|\mathcal{E}_{\delta}| \leq 2^{(1 + 2\delta + H(\delta))n}.
\]
Let \( f(\delta) := 1 + 2\delta + H(\delta) \).
Standard calculus techniques or computational tools show that \( f(\delta) \) is maximized at \( \delta = 4/5 \), with maximum value
\[
f\left(\tfrac{4}{5}\right)  \approx 1 + 1.6 + 0.7219 = 3.3219.
\]
Since each tuple \( (x,y,u,v) \in \mathcal{E}_{\delta} \) appears with probability \( p^4 \), the expected number of such tuples is at most
\[
\int_0^1 2^{(1 + H(\delta) + 2\delta)n} \cdot p^4 \, d\delta \leq 2^{3.3219 n} \cdot p^4.
\]
This tends to zero as \( n \to \infty \), provided that
\[
p^4 \leq 2^{-3.3219 n} \quad \text{or equivalently} \quad p \leq 2^{-3.3219n/4} \approx 2^{-0.8305 n}.
\]

Therefore, under the condition \( p \leq 2^{-0.8305 n} \), with high probability we have \( |W(x,y)| \leq 1 \) for all \( x,y \in Q_p^n \), which implies \( G_p \) is a clique.

To obtain a sharper bound on \( p \), we condition on the event that there are no pairs \( x, y \in Q_p^n \) with Hamming distance greater than some fixed \( \delta \in [0,1] \). In particular, we choose \( \delta>1/2 \) to be the smallest value such that
\[
2^{(1 + H(\delta))n} \cdot p^2 = o(1),
\]
which is equivalent to requiring
\(
p \leq 2^{-(1 + H(\delta))n/2}.
\)

Under this conditioning, it suffices to ensure that for all pairs \( x,y \in Q_p^n \) with \( \dist(x,y) \leq \delta n \),  \( |W(x,y)| \leq 1 \). The probability that there exists pairs at a greater distance is negligible. The number of such pairs is at most \( 2^{(1 + H(\delta))n} \), and for each such pair, the number of valid witness tuples \( (u,v) \) remains bounded by \( 2^{2\delta n} \). Thus, the expected number of problematic quadruples is bounded by
\[
2^{(1 + H(\delta) + 2\delta)n} \cdot p^4.
\]
We observe that
\[
\frac{1 + H(\delta) + 2\delta}{4} = \frac{1 + H(\delta)}{2},
\]
 holds precisely when \(\delta = (1 + H(\delta))/2 \). Solving this equation yields \( \delta \approx 0.8295 \), and therefore the slightly improved condition becomes
\[
p \leq 2^{-(1 + H(\delta))n/2} =2^{-\delta n} \approx 2^{-0.8295 n}.
\]

\subsection{Proof of Theorem \ref{thm: clique threshold} (2) (Not A Clique)}

Let \( \delta = (1 + H(\delta))/2 \approx 0.8295 \)  be as in the proof of Theorem \ref{thm: clique threshold} part (1). 
We show that for every \( \varepsilon > 0 \), if 
\[
p \geq  (1 + \varepsilon)^n \cdot 2^{-\delta n},
\]
then with high probability the graph \( G_p \) is not a clique.

Let \( k \) be sufficiently large. We will prove, using the Kim–Vu concentration inequality, that with high probability there exist \( x, y, u_1, \ldots, u_{2k} \in Q_p^n \) such that the Hamming distance between \( x \) and \( y \) is \( \delta n \), and
\[
\frac{1}{2}(x + y) = \frac{1}{2k} \sum_{i=1}^{2k} u_i.
\]
Then, by Lemma~\ref{lem: russian gadget}, it follows that \( xy \notin E(G_p) \), and hence \( G_p \) is not a clique.
To this end, define the polynomial
\[
\mathcal{P} := \sum X_x X_y X_{u_1} \cdots X_{u_{2k}},
\]
where the sum is over all tuples \( x, y, u_1, \ldots, u_{2k} \) satisfying the above condition. If $\mathcal{P}$ is not zero, then there is some pair $x,y \in V(G_p)$ which does not form an edge.

\paragraph{{\bf Lower bound on \( \mathbb{E}[\mathcal{P}] \).}}
We now estimate the expectation of \( \mathcal{P} \). There are at least \( 2^{(1+ H(\delta))n} /(n+1) \) choices for the pair \( (x, y) \) with Hamming distance \( \delta n \). For each such pair, we count the tuples \( u_1, \ldots, u_{2k} \in \{0,1\}^n \) that average to \((x + y)/2\).

In each of the \( \delta n \) coordinates where \( x \) and \( y \) differ,  \( (x+y)/2 \) takes the value $1/2$. So we require that exactly half of the \( u_i \)'s have a 1 and the other half have a 0 in that coordinate. This can be done in \( \binom{2k}{k} \) ways for each such coordinate. In the remaining \( (1 - \delta)n \) coordinates where \( x \) and \( y \) agree, all \( u_i \)'s must match \( x \) and \( y \) in those positions, so there is only one valid assignment. Hence, the total number of such tuples of $u$'s is  \( \binom{2k}{k}^{\delta n} \).

Each configuration $ x,y, u_1, \ldots u_{2k}$ contributes \( p^{2k+2} \) to the expectation, since each point appears independently with probability \( p \). Therefore,
\[
\mathbb{E}[\mathcal{P}] \geq  \frac{1}{n+1}\cdot 2^{(1 + H(\delta))n} \cdot \binom{2k}{k}^{\delta n} \cdot p^{2k+2}.
\]

Using the standard estimate \( \binom{2k}{k} \sim \frac{4^k}{\sqrt{\pi k}} \), we obtain
\[
\mathbb{E}[\mathcal{P}]  \geq  \frac{1}{n+1} \cdot 2^{(1 + H(\delta) + 2k\delta)n} \cdot \Theta(k)^{-\delta n/2} \cdot p^{2k+2}.
\]

Substituting \( p = (1 + \varepsilon)^n  2^{-\delta n} \), using $\delta = (1 + H(\delta))/2$, and simplifying yields
\[
\mathbb{E}[\mathcal{P}]  \geq \frac{1}{n+1} \cdot \Theta(k)^{-\delta n/2} \cdot (1 + \varepsilon)^{(2k + 2)n} .
\]

Finally, since \( \varepsilon > 0 \) is fixed, we can choose \( k \) sufficiently large (e.g., depending on \( 1/\varepsilon \)) so that the exponential growth from the \( (1 + \varepsilon)^{(2k+2)n} \) term dominates the decay in the factor $ \frac{1}{n+1} \Theta(k)^{-\delta n/2}$. Therefore, we conclude that
\[
\mathbb{E}[\mathcal{P}] \to \infty \quad \text{as } n \to \infty, \text{ as claimed.}
\]
\paragraph{{\bf Upper bound on \( \mathbb{E}_1[\mathcal{P}] \).}}
We now turn to the second moment. Recall that
\[
\mathbb{E}_1[\mathcal{P}] := \max_{v \in \mathcal{Q}^n} \mathbb{E}\left[\frac{\partial \mathcal{P}}{\partial X_v}\right],
\]
 We will show that
\[
\mathbb{E}_1[\mathcal{P}] \leq \frac{\mathbb{E}[\mathcal{P}]}{(1 + \beta)^n}
\]
for some \( \beta > 0 \) depending on \( \varepsilon \).
 We need only to consider partial derivatives with respect to the variables \(X_x, X_y, \) and \(X_{u_i}\).
 The partial derivative \( \frac{\partial \mathcal{P}}{\partial X_x} \) is a degree \( (2k+1) \) polynomial that counts the number of tuples \( (y, u_1, \ldots, u_{2k}) \) such that:
\begin{itemize}
    \item The Hamming distance between $x$ and $y$ is \(\delta n \), and
    \item \( \frac{1}{2}(x + y) = \frac{1}{2k} \sum u_i \).
\end{itemize}

For each such \( y \), there are \( \binom{2k}{k}^{\delta n} \) such tuples of \( u_i \)'s, and there are at most \( 2^{H(\delta)n} \) such \( y \)'s. Thus, we get:
\[
\mathbb{E}\left[\frac{\partial \mathcal{P}}{\partial X_x}\right] \leq 2^{H(\delta)n} \cdot \binom{2k}{k}^{\delta n} \cdot p^{2k+1}.
\]

Recalling that
\[
\mathbb{E}[\mathcal{P}] \geq \frac{1}{n+1}\cdot 2^{(1 + H(\delta))n} \cdot \binom{2k}{k}^{\delta n} \cdot p^{2k+2},
\]
we obtain:
\[
\mathbb{E}\left[\frac{\partial \mathcal{P}}{\partial X_x}\right] \leq \frac{\mathbb{E}[\mathcal{P}]}{2^n p} \cdot (n+1) \leq  \frac{\mathbb{E}[\mathcal{P}]}{(1+\varepsilon)^n}  \cdot\frac{(n+1)}{2^{(1-\delta)n}}   .
\]

Recalling that $\delta \approx .8295$, as $n$ grows large, the second term $ (n+1) 2^{(\delta-1)n}$ is less than one (in fact, it suffices to take $n > 29$). Therefore, we have the desired bound with $\beta = \ep$:
\[
\mathbb{E}\left[\frac{\partial \mathcal{P}}{\partial X_x}\right] \leq \frac{\mathbb{E}[\mathcal{P}]}{(1+\varepsilon)^n}.
\]

 The case of  \( X_y \) is the same as  \( X_x \) by symmetry.

Lastly, fix \( u \in \mathcal{Q}^n \) corresponding to one of the witness vertices \( u_i \). The partial derivative \( \frac{\partial \mathcal{P}}{\partial X_u} \) counts the number of tuples \( (x, y, u_1, \ldots, \hat{u}, \ldots, u_{2k}) \) such that \( u \) appears in the multiset \( \{u_1, \ldots, u_{2k}\} \), and the midpoint condition holds.

We count how many configurations like this exist. Consider a fixed pair \( (x, y) \) of Hamming distance \( \delta n \), and fix a witness \( u  \in \{u_1, \ldots, u_{2k}\}\). There are  \( \delta n \) coordinates where $x(i) \neq y(i).$ If $u(i) = 1$,
then we must choose \( k-1 \) of the remaining \( 2k-1 \) u's to be 1's in coordinate $i$, noting that $\binom{2k-1}{k-1} = \binom{2k-1}{k}$. If $u(i) = 0$, then we must choose \( k \) of the remaining \( 2k-1 \) u's to be 1's in coordinate $i$. If $x(i) = y(i)$ then there is no choice for coordinate $i$ of any witness $u$. Thus the number of valid ways to choose the remaining \( 2k-1 \) $u_i$'s is \(\binom{2k-1}{k}^{\delta n}. \)
Hence,
\[
\mathbb{E}\left[\frac{\partial \mathcal{P}}{\partial X_u}\right] \leq 2^n \cdot 2^{H(\delta)n} \cdot \binom{2k - 1}{k}^{\delta n} \cdot p^{2k + 1}.
\]

Using that \( \binom{2k - 1}{k} = \binom{2k}{k} \cdot 2^{-1} \), we have:
\[
\binom{2k - 1}{k}^{\delta n} =2^{-\delta n} \cdot \binom{2k}{k}^{\delta n}. 
\]
Thus we get the following bound in terms of $\mathbb{E}[\mathcal{P}]$. 
\[
\mathbb{E}\left[\frac{\partial \mathcal{P}}{\partial X_u}\right] \leq  \frac{\mathbb{E}[\mathcal{P}] (n+1) }{p 2^{\delta n} }
\]
Since \(p \geq  (1 + \varepsilon)^n \cdot 2^{-\delta n},\) we see that
\[
\mathbb{E}\left[\frac{\partial \mathcal{P}}{\partial X_u}\right] \leq \mathbb{E}[\mathcal{P}] \cdot (1 + \varepsilon)^{n}.
\]

Taking $\beta = \min \{ \ep, (1+\ep)^{-1} - 1\}$ (which we note is positive), shows that 
\[
\mathbb{E}_1[\mathcal{P}] \leq \frac{\mathbb{E}[\mathcal{P}]}{(1 + \beta)^n}.
\]
\paragraph{\bf{Concentration}}
We are now in position to apply the Kim–Vu concentration inequality (see Lemma~\ref{lem: concentration inequality}). By our calculations, 
\[ \sqrt{\mathbb{E}[\mathcal{P}] \cdot \mathbb{E}_1[\mathcal{P}]}  \leq \mathbb{E}[\mathcal{P}] (1+\beta)^{-n/2} . \]

So, 
\begin{align*}
\mathbb{P}[\mathcal{P} = 0] 
&\leq \mathbb{P}\left[ |\mathcal{P} - \mathbb{E}[\mathcal{P}]| \geq \mathbb{E}[\mathcal{P}] \right] \\
&\leq \mathbb{P}\left[ |\mathcal{P} - \mathbb{E}[\mathcal{P}]| \geq \sqrt{\mathbb{E}[\mathcal{P}] \cdot \mathbb{E}_1[\mathcal{P}]}   (1+\beta)^{n/2}\right].
\end{align*}

Taking (say) \( \ell := (1 + \beta)^{n / (5k)} \), we obtain
\begin{align*}
\mathbb{P}[\mathcal{P} = 0] &\leq \exp\left[ -\ell + (2k + 1)n +O(1) \right] = o(1).
\end{align*}
Therefore, with high probability, \( \mathcal{P} > 0 \) as desired.

\printbibliography

\end{document}